\renewcommand{\tilde}{\widetilde}
\newcommand{\I}{\mathrm{i}}
\DeclareMathOperator{\Sym}{Sym}
\DeclareMathOperator{\Diff}{Diff}
\DeclareMathOperator{\Res}{Res}
\title{Canonical rings of Gorenstein stable Godeaux surfaces}
\author{Marco Franciosi}
\address{Marco Franciosi\\Dipartimento di Matematica\\Universit\`a di Pisa \\Largo B. Pontecorvo 5\\I-56127  Pisa\\Italy}
\email{marco.franciosi@unipi.it}
\author{S\"onke Rollenske}
\address{S\"onke Rollenske\\FB 12/Mathematik und Informatik\\
Philipps-Universit\"at Marburg\\
Hans-Meerwein-Str. 6\\
35032 Marburg\\
Germany}
\email{rollenske@mathematik.uni-marburg.de}
\begin{document}
\begin{abstract}
Extending the description of canonical rings from \cite{reid78} we show that every Gorenstein stable Godeaux surface with torsion of order at least $3$ is smoothable.
\end{abstract}
\subjclass[2010]{14J29, 14J10, 14H45}
\keywords{stable surface, Godeaux surface}

\maketitle

\setcounter{tocdepth}{1}
\tableofcontents
\section{Introduction}
Surfaces of general type with the smallest possible invariants, namely $K_X^2=1$ and $p_g(X) = q(X) =0$ are called (numerical) Godeaux surfaces, in  honour of L. Godeaux who constructed the first such examples. 

It is well known that their algebraic fundamental group, or equivalently, the torsion subgroup  $T(X)\subset \Pic(X)$, is cyclic of order at most $5$. After his seminal work \cite{reid78}, Reid was lead to the following.
\begin{custom}[Conjecture (Reid)]
The fundamental group of a Godeaux surface is cyclic of order $d\leq 5$ and for every $d$ the moduli space of Godeaux surfaces with fundamental group $\IZ/d$ is irreducible and (close to) rational.

In particular, the  Gieseker moduli space $\gothM_{1,1}$ has exactly five irreducible components.
\end{custom}
Indeed, he had shown this to be true as soon as $|T(X)|\geq 3$ in \cite{reid78}. While quite a few  examples of Godeaux surfaces with torsion $\IZ/2$ or trivial have been constructed since then   (see e.\,g.\ \cite{Barlow84, Barlow85, catanese-debarre89, Inoue94, Werner94, Dol-Wer99, Dol-Wer01, coughlan16}) 
the conjecture is still open. For instance, it is still possible that there is a Godeaux surface with trivial algebraic fundamental group, whose fundamental group is infinite.

The purpose of this article is to extend the results from \cite{reid78} to Gorenstein stable Godeaux surfaces.

\begin{custom}[Theorem A]
 The moduli space of Gorenstein stable Godeaux surfaces with order of the torsion subgroup $|T(X)|= 3, 4, 5 $ is irreducible. 
 
 In particular, every Gorenstein stable Godeaux surface with $|T(X)|= 3,4,5$ is smoothable. 
\end{custom}

Stable surfaces are the the analogue of stable curves in dimension two: they are the surfaces that appear in a modular compactification of the Gieseker moduli space of surfaces of general type 
(see  \cite{alexeev06},  \cite{kollar12}, \cite{KollarModuli}). Indeed, this was one of the motivations for the introduction of this class of singularities by Koll\'ar and Shepherd-Barron in \cite{ksb88}. 
 However, this compactification, called the moduli space of stable surfaces, can have many additional irreducible or connected components containing non-smoothable surfaces. This happens also for Gorenstein stable Godeaux surfaces \cite{rollenske16}.

The method  of \cite{reid78}, studying the canonical ring of a maximal finite cover (via restriction to a canonical curve),  generalises quite well to Gorenstein stable surfaces.  
The only obstacle is that we cannot assume the general member of a base-point free linear system to be smooth or irreducible, that is, we have to deal with slightly worse curves throughout. As an example of the flavor of arguments that need to be reconsidered carefully: a non-hyperelliptic smooth curve of genus $4$ embeds canonically in $\IP^3$ as the complete intersection of a quadric and a cubic, while the image of the canonical map of the union of an elliptic curve and a non-hyperelliptic curve of genus $3$ meeting in a node is the union of a plane quartic and a point not in that plane.

In a companion paper \cite{FPR16b} we will study more in detail examples of Gorenstein stable Godeaux surfaces and in particular prove that $T(X)$ {is a cyclic group}	 of order at most $5$.
This article is  part of series of papers, mostly joint with Rita Pardini, exploring Gorenstein stable surfaces with $K_X^2=1$. These are tamed by the fact that $1$ is a very small number but every now and then some unexpected phenomena occur.

\subsection*{Acknowledgements}
The results in this paper are part of our exploration of Gorenstein stable surfaces with $K_X^2 = 1$ carried out jointly with Rita Pardini. We would like to thank her for this enjoyable collaboration.

The second author enjoyed several discussions with Stephen Coughlan and Roberto Pignatelli about canonical rings in general and Godeaux surfaces in particular.

The first author is grateful for support by the PRIN project  2010S47ARA$\_$011  ``Geometria delle Variet\`a Algebriche'' of italian MIUR. 

The second author is grateful for support by the DFG via the Emmy Noether programme and partially SFB 701.

\section{Notations and conventions}\label{sec:intro}
We work exclusively with schemes of finite type over the complex numbers.
\begin{itemize}
\item A curve is a projective scheme of pure dimension one but not necessarily reduced or irreducible or  connected.
\item A surface is a reduced, connected projective scheme  of pure dimension two but not necessarily irreducible.
\item We will not distinguish between Cartier divisors and invertible sheaves. 
\item All schemes we consider will be Cohen-Macaulay and thus admit a dualising sheaf $\omega_X$. We call $X$ Gorenstein if $\omega_X$ is an invertible sheaf. 
A canonical divisor is a Weil-divisor $K_X$ whose support does not contain any component of the non-normal locus and such that $ \ko_X(K_X)\isom \omega_X $.
\item Given a an invertible sheaf $L\in \Pic(X)$,  one defines the ring of sections 
\[R(X, L)= \bigoplus_{m\geq 0}H^0(mL);\] for $L=K_X$, we have the \emph{ canonical ring}  $R(K_X):=R(X,K_X)$.
\end{itemize}

\subsection{stable surfaces}
Since we work on stable surfaces we feel compelled to give a definition. However, the precise nature of the singularities of stable surfaces will only indirectly play a role in the sequel, namely when we use Riemann--Roch and Kodaira vanishing to compute the dimension of some spaces of sections. Our main reference  is \cite[Sect.~5.1--5.3]{KollarSMMP}.

Let $X$ be a demi-normal surface, that is,  $X$ satisfies $S_2$ and  at each point of codimension one $X$ is either regular or has an ordinary double point.
We denote by  $\pi\colon \bar X \to X$ the normalisation of $X$. The conductor ideal
$ \shom_{\ko_X}(\pi_*\ko_{\bar X}, \ko_X)$
is an ideal sheaf in both $\ko_X$ and $\ko_{\bar X} $ and as such defines subschemes
$D\subset X \text{ and } \bar D\subset \bar X,$
both reduced and of pure codimension 1; we often refer to $D$ as the non-normal locus of $X$. 

The  demi-normal surface $X$ is said to have \emph{semi-log-canonical (slc)}  singularities if it satisfies the following conditions: 
\begin{enumerate}
 \item The canonical divisor $K_X$ is $\IQ$-Cartier.
\item The pair $(\bar X, \bar D)$ has log-canonical (lc) singularities. 
\end{enumerate}
It  is called a stable  surface 
 if in addition $K_X$ is ample. We define the geometric genus of $X$ to be $ p_g(X) = h^0(X, \omega_X) = h^2(X, \ko_X)$ and the irregularity as $q(X) = h^1(X, \omega_X) = h^1(X, \ko_X)$, so that we have $\chi(X) := \chi(\ko_X) =  1-q(X) +p_g(X)$.
A Gorenstein stable surface is a stable surface such that $K_X$ is a Cartier divisor. 

We will discuss surfaces in the following hierarchy of moduli spaces of surfaces with fixed invariants $a=K_X^2$ and $b=\chi(X)$.
\[
 \begin{tikzcd}[row sep = small]
\gothM_{a,b} \dar[hookrightarrow]&=&\text{Gieseker moduli space of surfaces of general type}\\
\overline\gothM_{a,b}^{(G)} \dar[hookrightarrow]&=&\text{moduli space of Gorenstein stable surfaces}\\    
\overline\gothM_{a,b} &=&\text{moduli space of stable surfaces}
 \end{tikzcd}
\]

For the time being there is no  self-contained reference for the existence of the moduli space of stable surfaces with fixed invariants as a projective scheme, and we will not use this explicitly. A major obstacle in the construction is that in the  definition of the moduli functor one needs additional conditions beyond flatness to guarantee that invariants are constant in a family. For Gorenstein surfaces these problems do not play a role; we refer to \cite{kollar12} and the forthcoming book \cite{KollarModuli} for details.

% One can decompose these moduli spaces according to geometric genus and irregularity which are constant in a family of stable varieties by \ref{Kovacs/Kollar??}.

\section{Paracanonical curves on Gorenstein stable Godeaux surfaces}

In analogy with the smooth case, we define a stable (numerical)  Godeaux surface as a Gorenstein stable surface $X$ with $K^2_X=1$ and $p_g(X)=q(X)=0$. 
This is the same as asking for $K^2_X=\chi(X)=1$ by \cite[Prop. 4.2]{FPR15b}.

For a stable Godeaux surface the group $T(X)$ of torsion {invertible sheaves} 
coincides with $\Pic^0(X)$ and is a finite group. 

For a smooth Godeaux surface it is classically known that $T(X)$ is cyclic of order at most $5$ and in   \cite{FPR16b} we show that no surprises occur for stable Godeaux surfaces. 
\begin{prop}[{\cite[Cor.~4.2]{FPR16b}}]\label{prop: pi1 cyclic}
Let $X$ be a Gorenstein stable Godeaux surface. Then $\pi_1^{\text{alg}}(X)$, and hence $T(X)$, is a cyclic group  of order at most $5$. 
\end{prop}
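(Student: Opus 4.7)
The natural strategy, following the smooth case treated by Reid in \cite{reid78}, is to study the \'etale cover associated to a torsion subgroup and bound its invariants. Let $G\subset T(X)$ be any subgroup, and let $\pi\colon Y\to X$ be the associated \'etale $G$-Galois cover, i.e.\ the relative spectrum of $\bigoplus_{\chi\in G^\vee}L_\chi^{-1}$ with $L_\chi\in T(X)$. Since $\pi$ is \'etale and $X$ is Gorenstein stable, $Y$ is again Gorenstein stable with $K_Y=\pi^*K_X$, and multiplicativity under \'etale covers yields
\[
K_Y^2 \;=\; \chi(Y) \;=\; |G|.
\]

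The first step is to show $q(Y)=0$. From the decomposition $\pi_*\ko_Y=\bigoplus_\chi L_\chi^{-1}$ one has $h^1(Y,\ko_Y)=\sum_\chi h^1(X,L_\chi^{-1})$, and each summand vanishes: by $q(X)=0$ for the trivial character, and by a vanishing theorem for torsion line bundles on slc surfaces of the type established for stable Godeaux surfaces in \cite{FPR15b} otherwise. Consequently $p_g(Y)=|G|-1$. The second step is to invoke a Noether-type inequality $K_Y^2\geq 2p_g(Y)-4$ for Gorenstein stable surfaces; this gives $|G|\geq 2(|G|-1)-4$, hence $|G|\leq 6$.

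To improve the bound from $6$ to $5$ one has to exclude $|G|=6$: in this case $Y$ lies on the Noether line with invariants $(6,5,0)$, so its canonical map factors through a degree-two morphism onto a (possibly singular) surface of minimal degree in $\IP^4$, and compatibility with a free $\IZ/6$-action on $Y$ has to be ruled out by analysing the quotient and the branch divisor. Finally, non-cyclicity is excluded by noting that the only non-cyclic abelian group of order $\leq 5$ is $(\IZ/2)^2$; the corresponding degree-four cover has invariants $(K_Y^2,p_g)=(4,3)$, and a detailed study of its canonical ring together with the $G$-action on its graded pieces should exhibit the contradiction.

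\textbf{Main obstacle.} The most delicate step is the exclusion of $|G|=6$, and to a lesser extent of $G=(\IZ/2)^2$: both cases live in a range where the Noether inequality alone is insufficient, and must be settled by a refined analysis of the canonical ring. In the Gorenstein stable setting this is complicated by the possibility of singular or non-reduced target surfaces of minimal degree and by the fact that general canonical curves on $X$ need not be smooth or even irreducible, so that several shortcuts from the classical smooth proof need to be replaced by case-by-case verifications.
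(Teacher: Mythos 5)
First, a remark on what you are being compared against: the paper itself contains no proof of this proposition --- it is imported verbatim from the companion paper \cite{FPR16b}, and the present article only sets up some of the relevant tools (Lemmas \ref{lem: paracanonical 1} and \ref{lem: paracanonical 2} on paracanonical curves). Judged on its own merits, your proposal has a fatal gap at its central step. The Noether inequality in the classical form $K_Y^2\geq 2p_g(Y)-4$ is not available for Gorenstein stable surfaces: the sharp Noether-type bound in this setting is the much weaker $K_Y^2\geq \chi(Y)-2$ of Liu--Rollenske \cite{liu-rollenske16}, and it is attained by (non-normal) stable surfaces, so no intermediate strengthening holds in general. Feeding the correct inequality into your computation for the cover $Y$, with $K_Y^2=\chi(Y)=|G|$, gives $|G|\geq|G|-2$, i.e.\ no bound on $|G|$ whatsoever. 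The same problem undermines your exclusion of $|G|=6$: Horikawa's structure theory for surfaces on the line $K^2=2p_g-4$ (canonical map of degree two onto a surface of minimal degree) is a theorem about smooth minimal surfaces and has no known analogue for stable ones --- indeed the covers occurring here can be as degenerate as a union of five planes (cf.\ Remark \ref{rem: X15 as a quintic}). This failure of the classical geography is precisely why the stable case requires a different argument; the proof in \cite{FPR16b} runs instead through the geometry of paracanonical curves and restriction to them, in the spirit of the classical curve-theoretic bounds, rather than through the numerical invariants of the \'etale cover.

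Two further points. The statement concerns $\pi_1^{\text{alg}}(X)$ and not merely $T(X)$: bounding the abelian covers associated to torsion invertible sheaves does not control arbitrary finite \'etale covers, so even if your numerics worked you would still have to rule out, for instance, an $S_3$-quotient of $\pi_1^{\text{alg}}(X)$; for a general connected \'etale cover one still has $K_Y^2=\chi(Y)=\deg$, but the decomposition of $\pi_*\ko_Y$ into line bundles, and with it your computation of $q(Y)$, is lost. Finally, the vanishing $h^1(X,L_\chi^{-1})=0$ that you invoke is not free: by Serre duality it amounts to $h^1(K_X+L_\chi)=0$, which Lemma \ref{lem: paracanonical 2} establishes only under the additional hypothesis that a second non-trivial torsion sheaf exists. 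As it happens this last point is harmless for your strategy, since a positive $q(Y)$ would only strengthen a (hypothetical) Noether bound, but the first two gaps are essential and the approach as written does not yield the proposition.
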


We will now analyse paracanonical curves on a Gorenstein stable Godeaux surface $X$, that is curves numerically equivalent to $K_X$.

\begin{lem}\label{lem: paracanonical 1}
 Let $C_i$ be distinct Cartier divisors which are numerically equivalent to $K_X$. Then
\begin{enumerate}
\item $C_i$ is an  irreducible Gorenstein curve of arithmetic genus 2,
 \item $C_1$ and $C_2$ intersect with multiplicity 1 in a single point $P(C_1, C_2)$,
\item if $C_1-C_2$ is not linearly equivalent to $0$, then $P(C_1, C_3) \neq P(C_2, C_3)$.
\end{enumerate}
\end{lem}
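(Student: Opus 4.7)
The plan is to exploit the numerical equivalence $C_i \equiv K_X$ together with $K_X^2 = 1$ and the ampleness of the Cartier divisor $K_X$; all intersection numbers among the $C_i$'s then equal $1$. For (1), since $C_i$ is Cartier on a Gorenstein surface, $C_i$ is itself Gorenstein, and adjunction gives $\deg \omega_{C_i} = (K_X + C_i) \cdot C_i = 2$, so $p_a(C_i) = 2$. For irreducibility and reducedness I would write $C_i = \sum_j n_j D_j$ with $D_j$ distinct integral curves and $n_j \geq 1$; ampleness of $K_X$ forces $K_X \cdot D_j \geq 1$, so
\[
 1 = K_X \cdot C_i = \sum_j n_j (K_X \cdot D_j) \geq \sum_j n_j \geq 1,
\]
leaving a unique prime summand with multiplicity one. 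For (2), two distinct integral Cartier curves meet properly, and $C_1 \cdot C_2 = 1$ computes the total length of $C_1 \cap C_2$, which must therefore be a single reduced point $P(C_1,C_2)$.

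For (3) I would argue by contradiction. Suppose $P := P(C_1, C_3) = P(C_2, C_3)$. Then the effective Cartier divisors $C_1|_{C_3}$ and $C_2|_{C_3}$ on $C_3$ both equal the reduced point $P$, so the torsion line bundle $\eta := \ko_X(C_1 - C_2) \in \Pic^0(X)$, nontrivial by hypothesis, has trivial restriction to $C_3$. The strategy is to show that the restriction map $\Pic^0(X) \to \Pic(C_3)$ has trivial kernel. Twisting the standard sequence for $C_3$ by $\eta$ yields
\[
 0 \to \eta(-C_3) \to \eta \to \eta|_{C_3} \to 0.
\]
One has $H^0(\eta) = 0$, since an effective divisor numerically equivalent to zero would vanish by ampleness of $K_X$, and $H^0(\eta(-C_3)) = 0$, since $K_X \cdot (\eta - C_3) = -1 < 0$. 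Hence $H^0(\eta|_{C_3})$ embeds into $H^1(\eta(-C_3))$, which by Serre duality is dual to $H^1(K_X \otimes \eta^{-1}(C_3))$. Since $\eta^{-1}(C_3) \equiv K_X$ is an ample Cartier divisor, Kodaira vanishing for slc surfaces annihilates this group. Thus $H^0(\eta|_{C_3}) = 0$; but $\eta|_{C_3}$ has degree $0$ on the integral Gorenstein curve $C_3$, contradicting its nontriviality.

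The main step requiring care is invoking Kodaira vanishing in this Gorenstein slc setting, together with the (easy but important) observation that ampleness transfers along the numerical equivalence $\eta^{-1}(C_3) \equiv K_X$; once those inputs are in hand, the rest of the argument is routine manipulation of the restriction sequence and intersection numbers, and I do not anticipate any other genuine obstacle.
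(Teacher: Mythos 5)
Your proof is correct and follows essentially the same route as the paper: parts (1) and (2) via $K_X\cdot C_i=1$ and adjunction, and part (3) via the restriction sequence for $\ko_X(C_1-C_2)$ on $C_3$ (your $0\to\eta(-C_3)\to\eta\to\eta|_{C_3}\to 0$ is exactly the paper's sequence), using the non-effectivity of $C_1-C_2$ and Kodaira vanishing for $H^1(\eta(-C_3))$. The only difference is cosmetic: you phrase (3) as a contradiction, while the paper directly concludes $H^0(\ko_{C_3}(P(C_1,C_3)-P(C_2,C_3)))=0$.
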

\begin{proof}
 By assumption we have $K_XC_i=K_X^2=1$, thus $C_i$ is irreducible as $K_X$ is an ample Cartier divisor and Gorenstein. Since by adjunction $K_{C_i} = (K_X+C_i)|_{C_i}$, the first and the second item follow. For the last item consider
\[ 0\to \ko_X(C_1-C_2-C_3)\to \ko_X(C_1-C_2)\to \ko_{C_3}(P(C_1, C_3) - P(C_2, C_3))\to 0.\]
In the associated long exact sequence in cohomology $H^0(\ko_X(C_1-C_2))=0$ by assumption and $H^1( \ko_X(C_1-C_2-C_3)=0$ by Kodaira vanishing (see e.\,g.\ \cite[Cor.~19]{liu-rollenske14}). Thus $H^0(\ko_{C_3}(P(C_1, C_3) - P(C_2, C_3)))=0$ and the last claim follows.
\end{proof}

 The existence of a paracanonical curve is ensured as soon as $T(X)\neq 0$.
\begin{lem}\label{lem: paracanonical 2}
Let $L_1$ be a torsion invertible sheaf  on $X$. Then
\begin{enumerate}
 \item $h^0(m(K_X+L_1)) = 1+\binom m 2$ for $m\geq 2$.
\item If $L_1$ is non-trivial then $h^0(K_X+L_1) \geq  1$ and equality holds if there exists a different non-trivial torsion invertible sheaf  $L_2$ on $X$.
\end{enumerate}
\end{lem}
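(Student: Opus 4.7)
For part (1), my plan is a direct Riemann--Roch plus vanishing computation. On the Gorenstein stable surface $X$, since $L_1$ is numerically trivial one has $K_X\cdot L_1 = L_1^2 = 0$, so Riemann--Roch gives
\[
\chi(m(K_X+L_1))\;=\;\chi(\ko_X)+\tfrac12 m(K_X+L_1)\bigl((m-1)K_X+mL_1\bigr)\;=\;1+\tfrac12 m(m-1)\;=\;1+\tbinom{m}{2}.
\]
For $m\geq 2$, $m(K_X+L_1)-K_X=(m-1)K_X+mL_1$ is numerically equivalent to the ample divisor $(m-1)K_X$, so the Kodaira-type vanishing of \cite[Cor.~19]{liu-rollenske14} (already invoked in Lemma \ref{lem: paracanonical 1}) kills $h^i$ for $i>0$, and $h^0=\chi$ as claimed.

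For the first half of (2), I would first compute $\chi(K_X+L_1)=1+\tfrac12(K_X+L_1)\cdot L_1=1$ by the same Riemann--Roch. Then by Serre duality $h^2(K_X+L_1)=h^0(-L_1)$. If $-L_1$ were non-trivial torsion but admitted a section, that section would define a non-zero effective Cartier divisor $D\sim -L_1$; but $K_X\cdot D=K_X\cdot(-L_1)=0$ contradicts ampleness of $K_X$. Hence $h^2(K_X+L_1)=0$ and so $h^0(K_X+L_1)\geq\chi(K_X+L_1)=1$.

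For the equality statement, let $L_2$ be a second non-trivial torsion sheaf with $L_2\neq L_1$. By what we just proved applied to $L_2$, there exists a curve $C_2\in |K_X+L_2|$, which by Lemma \ref{lem: paracanonical 1} is an irreducible Gorenstein curve of arithmetic genus $2$. Restricting to $C_2$ gives the short exact sequence
\[
0\to \ko_X(L_1-L_2)\to \ko_X(K_X+L_1)\to \ko_X(K_X+L_1)|_{C_2}\to 0.
\]
Since $L_1-L_2$ is a non-trivial torsion sheaf, the same ampleness argument as above shows $h^0(\ko_X(L_1-L_2))=0$, so $h^0(K_X+L_1)\leq h^0\bigl(C_2,(K_X+L_1)|_{C_2}\bigr)$. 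The restricted sheaf has degree $(K_X+L_1)\cdot(K_X+L_2)=K_X^2=1$ on an irreducible Gorenstein curve of genus $2$, so Clifford's inequality (applicable to integral Gorenstein curves) yields $h^0\leq 1$, combining with the first half to give equality.

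The only potentially delicate step is the last one: one has to be sure that Clifford's theorem really applies to the possibly singular irreducible Gorenstein curves produced by Lemma \ref{lem: paracanonical 1}, and (less seriously) that the surface-level Kodaira vanishing covers the divisors $(m-1)K_X+mL_1$ twisted by a torsion class. Both are by now standard in the semi-log-canonical framework, and I would simply cite \cite{liu-rollenske14} for the vanishing and the classical extension of Clifford's theorem to integral Gorenstein curves for the degree bound.
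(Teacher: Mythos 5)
Your proposal is correct and follows essentially the same route as the paper: Riemann--Roch plus Kodaira vanishing for (1), Serre duality to kill $h^2$ for the lower bound in (2), and restriction to a curve $C_2\in|K_X+L_2|$ together with the fact that a degree-one invertible sheaf on an integral Gorenstein curve of genus $2$ has at most one section for the upper bound. The only difference is that you spell out via Clifford's inequality the step the paper leaves as an assertion.
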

\begin{proof}
 The first item follows directly from the Riemann-Roch formula \cite[Thm. 3.1]{liu-rollenske16} and Kodaira vanishing \cite[Cor.~19]{liu-rollenske14}. 
In the second case $h^2(K_X+L_i)=h^0(L_i)=0$ since $L_i$ is non-trivial thus $h^0(K_X+L_1)=h^1(K_X+L_1) +1\geq 1$. To show equality let $C_2$ be a section of $H^0(K_X+L_2)$, which exists by Riemann-Roch and is irreducible of arithmetic genus 2 by Lemma \ref{lem: paracanonical 1}. Thus the invertible sheaf  of degree 1  $(K_X+L_1)\restr C_2 $ has at most 1 section and the restriction sequence
\[0\to H^0(L_1-L_2)\to H^0(K_X+L_1)\to H^0((K_X+L_1)\restr {C'})\to \dots\]
gives $h^0(K_X+L_1)\leq1$. This concludes the proof.
\end{proof}

For later reference we compute the dimension of some cohomology spaces.

\begin{lem}\label{lem: dimension on paracanonical curves}
Let $X$ be a Gorenstein stable Godeaux surface with $T(X)$ cyclic of order $d\geq 3$,  $L$ be a generator for $T(X)$. Let $C\in |K_X+L|$ be  a paracanonical curve in $X$ and $M = K_X|_C$. Then $2M+L = K_C$ and 
\begin{align*}
 &h^0(C, M+iL) = \begin{cases}
                          0 &i=0,1\\ 1& i = 2, \dots ,d-1
                         \end{cases}\\
& h^0(C, 2M+iL) = \begin{cases}
                          1 &i\neq 1\\ 2& i = 1
                         \end{cases}\\
& h^0(C, mM+iL) =m-1 \quad m\geq3
\end{align*}
\end{lem}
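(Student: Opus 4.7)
The strategy is to pull back the computation from $X$ via the restriction sequence
\[
0 \to \ko_X((m-1)K_X + (i-1)L) \to \ko_X(mK_X + iL) \to \ko_C(mM+iL) \to 0,
\]
obtained from $C \sim K_X + L$, and to read off the claimed dimensions from the long exact sequence in cohomology. Adjunction gives the auxiliary statement $K_C = (K_X + C)|_C = (2K_X + L)|_C = 2M + L$ immediately.

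First I would tabulate the needed values on the surface. For $n \geq 2$ the class $nK_X + jL$ is big and nef (it is numerically $nK_X$, as $L$ is torsion), so Kodaira vanishing \cite[Cor.~19]{liu-rollenske14} kills $h^1$, while Serre duality kills $h^2$ because $-(n-1)K_X - jL$ has negative degree. Riemann--Roch then yields
\[
h^0(X, nK_X + jL) = \chi(\ko_X) + \tfrac{1}{2}(nK_X+jL)((n-1)K_X+jL) = 1 + \binom{n}{2}.
\]
For $n=1$ I invoke Lemma \ref{lem: paracanonical 2}: $h^0(K_X + jL)$ equals $0$ when $jL \cong \ko_X$ and $1$ when $jL$ is a non-trivial torsion sheaf (here the hypothesis $d \geq 3$ is used to guarantee a second non-trivial torsion element).

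Second, I need the connecting map on global sections to vanish, i.e.\ $h^1(X, (m-1)K_X + (i-1)L) = 0$ in each case. For $m \geq 2$ this is Kodaira vanishing applied to $K_X + ((m-2)K_X + (i-1)L)$, whose inner summand is big and nef. For $m=1$ one has $H^1(X, (i-1)L)$; when $i=1$ this is $h^1(\ko_X)=q(X)=0$, and when $i \neq 1$ Serre duality gives $h^1(X, (i-1)L) = h^1(X, K_X + (1-i)L)$, which vanishes by Riemann--Roch combined with the value $h^0(K_X + (1-i)L) = 1$ from Lemma \ref{lem: paracanonical 2}.

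With these two ingredients, the restriction map on sections is surjective, so
\[
h^0(C, mM + iL) = h^0(X, mK_X + iL) - h^0(X, (m-1)K_X + (i-1)L),
\]
and plugging in the tabulated values produces the three formulas: for $m=1$ one subtracts $h^0((i-1)L)$ from $h^0(K_X+iL)$; for $m=2$ one subtracts $h^0(K_X+(i-1)L)$ from $2$; and for $m \geq 3$ one has $(1+\binom{m}{2}) - (1+\binom{m-1}{2}) = m-1$ irrespective of $i$. The work is essentially bookkeeping; the only delicate point is justifying $h^1(X, K_X + jL) = 0$ for non-trivial torsion $jL$, which does not come directly from the big-and-nef version of Kodaira vanishing but instead from Riemann--Roch plus Lemma \ref{lem: paracanonical 2}.
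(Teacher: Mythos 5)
Your argument is correct and follows essentially the same route as the paper's (one-line) proof: the restriction sequence for $C\subset X$, adjunction, the known invariants of $X$, and Riemann--Roch, the only cosmetic difference being that you run Riemann--Roch on the surface rather than on the genus-$2$ curve $C$ itself. The one point to watch is your mid-proof claim that the $m=2$ connecting map vanishes by Kodaira vanishing applied to a ``big and nef'' summand --- for $m=2$ that summand is $(i-1)L$, which is only numerically trivial --- but you correctly flag and repair exactly this gap at the end via Riemann--Roch together with Lemma~\ref{lem: paracanonical 2}.
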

\begin{proof}
 Since $C\in |K_X+L|$ is a Gorenstein curve of genus 2 everything, follows from the restriction sequence, the invariants of $X$, adjunction and Riemann--Roch on $C$.
\end{proof}

We will be studying graded section rings on curves. To control multiplication maps we need Castelnuovos base-point-free pencil trick and its variants.

 \begin{prop} \label{prop:van}  Let $C$ be a  reduced and connected  Gorenstein curve and let $F$, $H$ be  invertible sheaves on $C$. 
 Assume   $W \subseteq H^{0}(C, F) $ 
is a subspace of $\dim = r+1$ which defines  
	a base point free system. Then 
 \begin{enumerate}
	 \item  If   
$H^1(C,  H\otimes F^{-1})=0$,   then the multiplication map
\[ W \otimes H^0(C,H) \rightarrow H^0(C,F\otimes H) \]
is surjective.
   \item If $r=1$, i.\,e., $W$ is a base point free pencil,  then 
   \[\ker \{W \otimes H^0(C,H) \rightarrow H^0(C,F\otimes H) \} \cong H^0(C, H \otimes F^{-1}).\]
 \end{enumerate}
\end{prop}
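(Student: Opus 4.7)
I would work with the syzygy sheaf $M_W := \ker(W \otimes \ko_C \to F)$ associated to the evaluation map of the base-point-free system $W$. Since the surjection $W \otimes \ko_C \twoheadrightarrow F$ has a line bundle as target, it splits locally at every point (a rank-one free module over a local ring is projective), so $M_W$ will be locally free of rank $r$. Tensoring the defining sequence $0 \to M_W \to W \otimes \ko_C \to F \to 0$ with $H$ and passing to cohomology should produce
\[ 0 \to H^0(M_W \otimes H) \to W \otimes H^0(H) \to H^0(F \otimes H) \to H^1(M_W \otimes H) \to \cdots, \]
reducing (2) to identifying $H^0(M_W \otimes H)$ and (1) to proving the vanishing $H^1(M_W \otimes H) = 0$.

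Part (2) would then be immediate. For $r = 1$ the sheaf $M_W$ is itself a line bundle, and computing determinants of the defining sequence yields $M_W \isom \det M_W = F^{-1}$, so $H^0(M_W \otimes H) = H^0(H \otimes F^{-1})$ as required.

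For part (1), which is the substantive statement, my plan is to extract the desired vanishing from the exterior square of the defining sequence of $M_W$. Because $F$ has rank one we have $\wedge^2 F = 0$, so the filtration on $\wedge^2(W \otimes \ko_C) \isom \wedge^2 W \otimes \ko_C$ induced by the short exact sequence collapses to the single sequence
\[ 0 \to \wedge^2 M_W \to \wedge^2 W \otimes \ko_C \to M_W \otimes F \to 0. \]
After twisting by $H \otimes F^{-1}$, the vanishing $H^2 = 0$ on the one-dimensional $C$ should give a surjection $\wedge^2 W \otimes H^1(H \otimes F^{-1}) \twoheadrightarrow H^1(M_W \otimes H)$, so the hypothesis in (1) forces $H^1(M_W \otimes H) = 0$ for any $r$.

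The main thing I would watch out for is treating $M_W$ and its wedge powers correctly on the possibly singular Gorenstein curve $C$, but this is handled by the local splitting observation: $M_W$ is locally free and standard multilinear algebra applies locally and then globalises. In this sense the argument is a slightly more careful version of the classical Castelnuovo pencil trick, with the base-point-freeness assumption used exactly to guarantee that the evaluation map is surjective and that the filtration terms come out as predicted.
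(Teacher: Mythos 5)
Your argument is correct, and it is more self-contained than what the paper offers: the paper disposes of the proposition in one line by citing \cite[Prop.~25]{franciosi13} for part (1) and the base-point-free pencil trick of \cite{ACGH} for part (2). Your treatment of part (2) is in substance identical to the cited ACGH proof --- the Koszul complex $0 \to F^{-1} \to W \otimes \mathcal{O}_C \to F \to 0$ of a base-point-free pencil is exactly your kernel sheaf $M_W$ together with the determinant identification $M_W \cong F^{-1}$. For part (1) you replace the external citation by the exterior-square sequence $0 \to \wedge^2 M_W \to \wedge^2 W \otimes \mathcal{O}_C \to M_W \otimes F \to 0$ (valid because $\wedge^2 F = 0$ for the invertible sheaf $F$), which after twisting by $H \otimes F^{-1}$ and using $H^2 = 0$ on the one-dimensional $C$ yields a surjection $\wedge^2 W \otimes H^1(H \otimes F^{-1}) \twoheadrightarrow H^1(M_W \otimes H)$, hence the required vanishing of the cokernel of the multiplication map. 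Every step is sound: the local splitting of a surjection onto an invertible sheaf does make $M_W$ locally free on the (possibly singular) curve, so the multilinear algebra globalises exactly as you say. What your route buys is uniformity --- it makes no use of the Gorenstein, reduced or connected hypotheses and avoids the general-position/induction arguments that become delicate on non-integral curves --- at the cost of being slightly less elementary than the pencil case. It would have been worth stating explicitly that the cokernel of $W \otimes H^0(H) \to H^0(F \otimes H)$ injects into $H^1(M_W \otimes H)$, but that is immediate from your first long exact sequence.
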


\begin{proof} The first part follows from \cite[Prop. 25]{franciosi13}. 
 
In the latter case  we can repeat verbatim  the proof of the base-point-free pencil trick given  in \cite[chap. III , \S 3, pag. 126]{ACGH}.  
\end{proof}

\begin{rem}
In the following sections we study  canonical rings of surfaces by restriction to canonical curves $C$. One step in the analysis is usually to show that the Gorenstein curve $C$ behaves like a smooth curve of the same genus. The relevant notion in this context is numerical connectedness, which was defined in \cite{CFHR}, and by \cite{franciosi-tenni14} everything would follow if $C$ is numerically $3$-connected.
Checking $3$-connectedness turns out to be at least as intricate as a direct proof of the properties we need in each  cases.  Thus we decided to avoid this extra layer of complexity.
\end{rem}

\section{Canonical ring of Gorenstein stable  Godeaux surfaces with large torsion}

In this section we adapt the algebraic treatment of numerical Godeaux surfaces with sufficiently large torsion by Miles Reid to the Gorenstein case. Many arguments carry over unchanged from \cite{reid78}. The idea is that, if $\pi_1^{\alg}$ is sufficiently big, then the universal cover is simple to describe algebraically. The main issue is that due to the presence of singularities we have to work on considerably more singular curves.

\subsection{The case $|T(X)|=5$} 
\begin{thm}
Consider the action of $G=\IZ/5$ on $S = \IC[x_1, \dots, x_4]$ given by $x_i\mapsto \xi^ix_i$ where $\xi$ is a primitive fifth root of unity. 

 Let $X$ be a stable Gorenstein surface with $\pi_1^\alg(X) = \IZ/5$ and $Y$ the (algebraic) universal cover. Then $Y$ is a quintic surface in $\IP^3$ defined by a $G$-invariant  quintic polynomial $q\in S$ and the canonical ring of $X$ is
\[ R(X, K_X) = \left(S/(q)\right)^G.\]
\end{thm}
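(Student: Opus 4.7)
Let $L$ be a generator of $T(X) \cong \IZ/5$. Since $L^5 \isom \ko_X$, we may form the étale $G = \IZ/5$-cover $\pi\colon Y \to X$ with $\pi_*\ko_Y = \bigoplus_{i=0}^{4} L^i$ and $G$ acting on $L^i$ by the weight $\xi^i$. Then $Y$ is a Gorenstein stable surface with $K_Y = \pi^*K_X$, and the eigenspace decomposition
\[
H^j(Y, mK_Y) = \bigoplus_{i=0}^{4} H^j(X, mK_X + iL),
\]
together with Lemma~\ref{lem: paracanonical 2} and Serre duality, yields $K_Y^2 = 5$, $q(Y) = 0$ and $p_g(Y) = 4$.

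\textbf{Strategy.} By Lemma~\ref{lem: paracanonical 2}(ii) each $H^0(K_X + iL)$ with $i \in \{1, \dots, 4\}$ is one-dimensional; I pick a generator $x_i$. The $x_i$ form a $G$-eigenbasis of $H^0(Y, K_Y)$ matching the representation of the theorem, and I let $\phi\colon S = \IC[x_1, \dots, x_4] \to R(Y, K_Y)$ be the induced $G$-equivariant graded ring homomorphism. The plan is first to show that $\phi$ is surjective (i.e.\ the canonical ring is generated in degree one) and then to read off the kernel from Hilbert functions and $G$-characters. Granted generation, Lemma~\ref{lem: paracanonical 2}(i) gives $\dim R(Y, K_Y)_m = \binom{m+3}{3}$ for $m \leq 4$ and $\dim R(Y, K_Y)_m = \binom{m+3}{3} - \binom{m-2}{3}$ for $m \geq 5$, which is precisely the Hilbert function of a quintic hypersurface in $\IP^3$. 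Since $S$ is a domain, any nonzero $q \in \ker\phi_5$ generates a principal ideal whose graded dimensions agree with those of $\ker\phi$ in every degree, hence $\ker\phi = (q)$. A short Molien-type computation on $S_5$ identifies the unique $G$-character over-represented relative to $R(Y, K_Y)_5$ as the trivial one, so $q \in S^G$. Ampleness of $K_Y$ then gives $Y = \mathrm{Proj}\,R(Y, K_Y) = V(q) \subset \IP^3$, and taking $G$-invariants yields $R(X, K_X) = R(Y, K_Y)^G = (S/(q))^G$.

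\textbf{The technical heart and main obstacle.} Generation in degree one amounts to surjectivity of the multiplication maps $H^0(K_Y) \otimes H^0(mK_Y) \to H^0((m+1)K_Y)$ for every $m \geq 1$. I would proceed character by character and restrict to a paracanonical curve $C \in |K_X + L|$, which by Lemma~\ref{lem: paracanonical 1} is an irreducible Gorenstein curve of arithmetic genus $2$. Kodaira vanishing on $X$ turns the restriction sequences into a reduction of the surjectivity on $X$ to the analogous statement on $C$; there Lemma~\ref{lem: dimension on paracanonical curves} supplies every dimension I need, and Proposition~\ref{prop:van} (the base-point-free pencil trick for Gorenstein curves) is the main tool. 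The hard part, and where the Gorenstein stable case genuinely diverges from Reid's smooth one, is twofold: verifying that a suitable pencil in the span of the $x_i|_C$ is base-point-free on the possibly singular $C$, and, already in degree two, showing the linear independence of the two natural monomials $x_i x_j$ and $x_k x_l$ with $i+j \equiv k+l \pmod 5$ and $\{i, j\} \neq \{k, l\}$ in $H^0(2K_X + (i+j)L)$. The latter refines Lemma~\ref{lem: paracanonical 1}(iii) and amounts to controlling when intersection points $P(C_i, C_j)$ of the paracanonical curves $C_i = \mathrm{div}(x_i)$ can coincide.
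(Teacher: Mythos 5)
Your proposal is correct and follows essentially the same route as the paper, whose entire proof of this theorem is the assertion that Reid's argument in \cite{reid78} carries over without modification: your numerology (the Hilbert function $5\bigl(1+\binom{m}{2}\bigr)$ matching a quintic in $\IP^3$, the Molien count showing the one extra character in $S_5$ is the trivial one, hence $q\in S^G$) and the reduction of everything to generation of $R(Y,K_Y)$ in degree one are exactly what is needed. The two difficulties you flag at the end are precisely what Lemma~\ref{lem: paracanonical 1} supplies: irreducibility of the paracanonical curves already forces linear independence of monomials such as $x_1x_4$ and $x_2x_3$ (proportionality would give $C_1+C_4=C_2+C_3$ and hence $\{C_1,C_4\}=\{C_2,C_3\}$, contradicting the distinct torsion classes), while part~(iii) gives $P(C_1,C_2)\neq P(C_1,C_3)$ and therefore base-point-freeness of the relevant pencils on $\tilde C$ --- the same mechanism the paper spells out explicitly in its treatment of the $|T(X)|=4$ and $|T(X)|=3$ cases.
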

\begin{proof}
 The proof in  \cite{reid78} works without modification. 
\end{proof}
\begin{rem}\label{rem: X15 as a quintic}
 Consider a quintic given by the orbit of a plane not meeting the fixed points of the above action, for example given by 
\begin{multline*} q = (x_1+x_2+x_3+x_4)(\xi x_1+\xi^2x_2+\xi^3x_3+\xi^4x_4)(\xi^2 x_1+\xi^4x_2+\xi x_3+\xi^3x_4)\\ \cdot
(\xi^3 x_1+\xi x_2+\xi^4 x_3+\xi^2x_4)(\xi^4 x_1+\xi^3 x_2+\xi^2 x_3+\xi x_4).
 \end{multline*}

The surface $Y = \{q=0\}$ is a normal crossing divisor in $\IP^3$, in particular stable. It has exactly 10 triple points, thus the quotient by the free action is a stable surface with normalisation $\IP^2$ and two triple points. This is the surface $X_{1,5}$ described in \cite[Sect.~4.2]{FPR15b}.
\end{rem}

\subsection{The case $|T(X)|=4$} 
By Proposition \ref{prop: pi1 cyclic} in this case $T(X)\isom \IZ/4$. This case can be described explicitly.
\begin{thm}
Consider the polynomial ring $S =\IC[x_1, x_2, x_3, y_1, y_3]$ with $\deg x_j =1$ and $\deg y_j = 2$. An action of $G=\IZ/4$ on $S$ is defined by $x_j\mapsto \I^j x_j$ and $y_j \mapsto \I^jy_j$. 
  Let $X$ be a stable Gorenstein surface with $T(X) = \IZ/4$ and $f\colon Y\to X$ be the associated cover. Then there are two $G$-invariant polynomials $q_1, q_2\in S$ of weighted degree 4 such that 
\[ R(Y, K_Y) = S/(q_1, q_2)\text{ and } R(X, K_X)=  R(Y, K_Y)^G.\]

In other words, $X$ is the free quotient by $G$ of a weighted complete intersection of bidegree $(4,4)$ in $\IP(1,1,1,2,2)$.
\end{thm}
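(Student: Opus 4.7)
The plan is to adapt Reid's treatment of the smooth case \cite{reid78}, with the irreducibility statement in Lemma \ref{lem: paracanonical 1} taking the place of smoothness of paracanonical curves where needed.

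Since $f\colon Y\to X$ is \'etale of degree $4$ with $f_*\ko_Y=\bigoplus_{i=0}^{3}L^{-i}$, the canonical ring decomposes $G$-equivariantly as
\[H^0(mK_Y)=\bigoplus_{i=0}^{3}H^0(mK_X+iL),\]
where $G$ acts on the $i$-th summand by the character $\I^i$. Lemma \ref{lem: paracanonical 2} then yields $h^0(K_Y)=3$ and $h^0(mK_Y)=4\bigl(1+\binom{m}{2}\bigr)$ for $m\geq 2$. Choose $x_i\in H^0(K_X+iL)\setminus\{0\}$ for $i=1,2,3$, so that $C_i=(x_i)$ is an irreducible Gorenstein curve of arithmetic genus two by Lemma \ref{lem: paracanonical 1}. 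In degree two, the six products $x_ix_j$ distribute over the four weight-spaces of $H^0(2K_Y)$ as $\{x_1x_3,x_2^2\}$ in weight $0$, $\{x_2x_3\}$ in weight $1$, $\{x_1^2,x_3^2\}$ in weight $2$, and $\{x_1x_2\}$ in weight $3$. A hypothetical relation $\alpha x_1^2+\beta x_3^2=0$ restricted to $C_1$ would force $x_3|_{C_1}=0$, hence $C_1\subseteq C_3$ and so $C_1=C_3$ by irreducibility, contradicting $K_X+L\not\cong K_X+3L$; the same argument handles the weight-$0$ pair. To complete bases in weights $1$ and $3$ we introduce $y_1\in H^0(2K_X+L)\setminus\IC\cdot x_2x_3$ and $y_3\in H^0(2K_X+3L)\setminus\IC\cdot x_1x_2$.

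The main step is to show that the resulting homomorphism $S=\IC[x_1,x_2,x_3,y_1,y_3]\to R(Y,K_Y)$ is surjective. For this I would restrict to the irreducible Gorenstein genus-two curve $C=C_2$: since the three paracanonical curves have empty triple intersection by Lemma \ref{lem: paracanonical 1}(3), the pencil $\langle x_1^2,x_3^2\rangle|_C\subset H^0(C,K_C)$ is base-point-free on $C$. Proposition \ref{prop:van}(1), combined with Kodaira vanishing \cite[Cor.~19]{liu-rollenske14} on $X$ used to lift back through the restriction sequences
\[0\to mK_X+iL-C\to mK_X+iL\to (mK_X+iL)|_{C}\to 0,\]
then gives by induction on $m$ that $S_1\cdot R_{m-1}+\IC\langle y_1,y_3\rangle\cdot R_{m-2}=R_m$ for all $m\geq 3$. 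This is the main technical obstacle, since $C$ is only Gorenstein and one must use the Gorenstein version of the base-point-free pencil trick (Proposition \ref{prop:van}) rather than its smooth-curve counterpart; in the smooth case Reid can cite classical results on genus-two curves directly.

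Finally, the weighted Hilbert series $(1-t)^{-3}(1-t^2)^{-2}$ of $S$ and the Hilbert series $(1+t^2)^2(1-t)^{-3}$ of $R(Y,K_Y)$ derived from the dimension formula above differ exactly by the factor $(1-t^4)^2$. Hence the kernel of $S\twoheadrightarrow R(Y,K_Y)$ is generated by two quartic relations $q_1,q_2$ which then necessarily form a regular sequence. Since the surjection is $G$-equivariant, the $q_i$ may be chosen as $G$-eigenvectors (one of weight $0$, one of weight $2$) so that the ideal $(q_1,q_2)$ is $G$-stable, and the identity $R(X,K_X)=R(Y,K_Y)^G$ follows on taking $G$-invariants.
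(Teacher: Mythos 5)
Your setup is sound and closely parallel to the paper's: the eigenspace decomposition of $R(Y,K_Y)$, the dimension counts from Lemma \ref{lem: paracanonical 2}, the choice and independence of $x_1,x_2,x_3,y_1,y_3$ in degrees $1$ and $2$, and the concluding Hilbert-series bookkeeping (two quartic relations of weights $0$ and $2$, forming a regular sequence) are all correct and match the paper. The genuine gap is the central surjectivity step, which you compress into one sentence. For your pencil $W=\langle x_1^2,x_3^2\rangle|_{C}\subset H^0(K_{C})$ on the genus-$2$ curve $C=C_2$, the hypothesis of Proposition \ref{prop:van}(i) reads $H^1(C,H\otimes K_{C}^{-1})=0$ with $H=((m-2)K_X+(i-2)L)|_{C}$; since $H\otimes K_C^{-1}$ has degree $m-4$ on a genus-$2$ curve, this vanishing fails for every $m\le 6$ in at least one weight, i.e.\ precisely in the range where generation actually has to be verified. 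Worse, in degree $3$ and weights $i=0,2$ one has $H^0(C,H)=0$, so the pencil contributes nothing to those weight-spaces at all; hence your claimed identity $S_1\cdot R_{m-1}+\langle y_1,y_3\rangle\cdot R_{m-2}=R_m$ is not an instance of the pencil trick and requires a separate, weight-by-weight independence argument in each low degree.

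The paper's proof makes visible exactly where the difficulty sits. Working on the genus-$5$ canonical curve $\tilde C=f^{-1}(C_1)$ with the degree-one pencil $\langle x_2,x_3\rangle$, the only problematic multiplication is $H^0(\tilde M)\otimes H^0(3\tilde M)\to H^0(4\tilde M)$, whose image has codimension exactly $1$ because $2\tilde M=K_{\tilde C}$ forces $H^1(2\tilde M)\ne 0$; the missing piece is the weight-$2$ summand $H^0(2K_{C})$, and it is filled by products of the degree-two generators using that the canonical system of the \emph{integral Gorenstein} genus-$2$ curve $C$ is a base-point-free pencil, so that $H^0(K_{C})\otimes H^0(K_{C})\to H^0(2K_{C})$ is surjective. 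That single geometric input --- the one place where the Gorenstein-versus-smooth issue genuinely bites, and the reason Lemma \ref{lem: paracanonical 1} alone does not suffice --- is absent from your argument. Without it, or an equivalent explicit verification in degrees $3$ through $6$, the surjectivity of $S\to R(Y,K_Y)$ is not established; the remaining steps ($G$-equivariance of the relations and $R(X,K_X)=R(Y,K_Y)^G$) are fine and reduce to Reid as in the paper.
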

\begin{proof}
 Fix a generator $L$ of $T(X)$ and let $C$ be the paracanonical  curve corresponding to a generator  $x_1\in H^0(X, K_X+L)$.
  We set $M = K_X|_C$. The preimage $\tilde C = \inverse f(C) $ is a canonical curve in $Y$ and has arithmetic genus $5$. 
  
  Let  $\tilde M = K_Y|_{\tilde C} = f^*M$.
 We now compute the restriction of the canonical ring of $Y$ to $\tilde C$, which via $f_*$ can also be interpreted as a $\IZ\times \IZ/4$ graded ring on $C$, that is,
 \[R = R(Y, K_Y)|_{\tilde C} = R(\tilde C, \tilde M ) = \bigoplus_{m} H^0(m\tilde M) = \bigoplus_m\bigoplus_{i=0}^3H^0(C, mM+iL).\]
 This decomposition is the weight-space decomposition with respect to the $\IZ/4$-action on $\tilde C$. Note that we computed all the relevant dimensions in Lemma \ref{lem: dimension on paracanonical curves}.
 
 Let $x_2$ be a generator of $H^0(M+2L)$ and $x_3$ be a generator of $H^0(M+3L)$. Then $H^0(\tilde M)=\langle x_2, x_3\rangle$ defines a base-point-free pencil on $\tilde C$ by Lemma \ref{lem: paracanonical 1}. We now choose  $y_1\in H^0(2 M+L)$ and $y_3\in H^0(2M+3L)$ such that they span a complement of 
 $ \im \{ S^2 H^0(\tilde M) \to H^0(2\tilde M)\}$.

 We claim that $x_2, x_3, y_1, y_3$ generate $R$. 
Indeed for every $k \geq 2$ consider the multiplication map 
\[ \mu_{1,k}\colon  H^0(\tilde M)\tensor H^0(k\tilde M)\to H^0((k+1)\tilde M).  \]
 It is surjective for $k=2$ since its Kernel is isomorphic to  $H^0(\tilde M)$   by (ii) of  Proposition \ref{prop:van}, whilst  $h^0(2\tilde M) =g(\tilde C)=5$ and    $h^0( 3\tilde M) =8$  by Lemma
  \ref{lem: dimension on paracanonical curves}.
For $k\geq 4$ it is surjective  by   Prop. \ref{prop:van} \refenum{i} as  $H^1( (k-1)\tilde M) =0$. 
For $k=3$ the image has codimension $1$, and it remains to show that there is no additional generator in degree $4$. 
Considering a second multiplication map $\mu_{2,2}\colon  H^0(2\tilde M)\tensor H^0(2\tilde M)\to H^0(4\tilde M)$ our claim follows if  
  $$\im (\mu_{2,2}) + \im (\mu_{1,3}) = H^0(4\tilde M)$$
as vector spaces. 

To prove this, we first decompose image and target of $\mu_{1,3}$ into weight spaces to identify the culprit for non-surjectivity: it is the sequence
{\small 
\[
\begin{tikzcd}[column sep = small]
0\rar&
H^0(K_C) \arrow{rr}{\begin{pmatrix} x_2\\-x_3\end{pmatrix}} &&
\begin{matrix}  H^0(M+3L)\tensor H^0(3M+3L) \\ \oplus   H^0(M+2L)\tensor H^0(3M) \end{matrix}\rar& H^0(2K_C)\rar& H^1(K_C)\rar &0
\end{tikzcd}
\]
}
where we used $K_C = 2M+L$. 

Therefore it suffices to show that the image of $\mu_{2,2}$ contains  $H^0(2K_C)$.
But since $C$ is an integral Gorenstein curve of genus $2$ the canonical linear system is also a base-point free pencil and the long exact sequence for 
\[ 0\to \ko_C \to K_C\tensor H^0(K_C) \to 2K_C\to 0\]
shows that the multiplication map $ H^0(K_C)\tensor H^0(K_C)\to H^0(2K_C)$ is surjective. 

Therefore $R$ is generated in degree at most two and hence by $x_2, x_3, y_1, y_3$. 
Counting dimensions one checks that the the kernel of $\IC[x_2, x_3, y_1, y_3]\onto R$ is generated by two relations in weighted  degree $4$ (of weight $0$ and $2$).

Thus,  $R(Y, K_Y)|_{\tilde C} = R(\tilde C, \tilde M )$ realises $ \tilde C$ as a complete intersection of degree $(4,4)$ in $\IP(1,1,2,2)$ which implies that the canonical ring of $Y$ realises $Y$  as a complete intersection of degree $(4,4)$ in $\IP(1,1,1,2,2)$.

The rest of the statements carries over verbatim from \cite[\S2]{reid78}.
\end{proof}

\subsection{The case $|T(X)|=3$}
We now consider the following situation. Let $X$ be a Gorenstein stable numerical Godeaux surface with $T(X)=\IZ/3$, generated by $L$, and   let $C$ be the paracanonical  curve corresponding to a generator  $x_1\in H^0(X, K_X+L)$. Let $f\colon Y\to X$ be the associated \'etale cover of $X$. As in the previous case we set
 \begin{gather*}
 M = K_X\restr C, \, L = L\restr C,\\
\tilde C = \inverse f (C)\in |K_Y,|\\
\tilde M = f^*M = K_Y\restr {\tilde C}.
\end{gather*}
We will analyse the canonical ring of $Y$ via restriction to $\tilde C$. 

We now introduce some further notation. Let $R=R(\tilde C, \tilde M)$ be the graded ring of sections associated to $\tilde M$. Push-forward to $C$ induces an additional $\IZ/3$ grading on this ring. Writing 
\[ R_i^m = H^0(C, mM+iL) ,   \ \ R^m = \left(R^m_0\oplus R^m_1\oplus R^m_2\right)\]
we have $R = \bigoplus R^m$, where 
we consider the $R_i^m$ weight-spaces for an appropriately normalised $\IZ/3$-action.

We can choose  elements $x_2, y_0, y_1, y_2\in R$ such that
\begin{equation}\label{eq: choice of basis}
\begin{split}
&R^1_2 = H^0(\ko_C(P))= \langle x_2\rangle,\\
&R^2_0 = \langle y_0\rangle, R^2_1 = \langle x_2^2, y_1\rangle, R^2_2 = \langle y_2\rangle,\\
&(y_0:y_1:y_2)(\tilde P) = (1:1:1).
\end{split}
\end{equation}
Note that we have made use of the fact that $C$ is reduced to ensure that $x_2^2\neq 0$. Since $C$ is integral of genus $2$ the sections of $R_1^2 = H^0(K_C)$ define a base-point-free pencil on $C$ and we get a polynomial subring $\Sym^*R^2_1 \subset R$. Thus $y_1$ cannot vanish at $P$ and also $y_0$ and $y_2$ cannot vanish at $P$ because $R_1^1=R^1_0 = 0$; thus  we can scale the sections to satisfy the last condition.

Denoting the canonical map of $\tilde C$ by 
\[ \phi=\phi_{K_{\tilde C}}\colon \tilde C \dashrightarrow \IP(R^2)^\vee\isom\IP^3\]
the action of $\IZ/3$ on the canonical model is induced by an action on $\IP^3$. 

\begin{prop}\label{prop: canonical ring C tilde}
 Let $R(\tilde C, K_{\tilde C}) = R^{[2]} = \bigoplus_{m\geq0} R^{2m} $ be the subring of even elements of $R$, which coincides with  the canonical ring of $\tilde C$ (with degrees multiplied by $2$). Choosing appropriate elements  satisfying the conditions \eqref{eq: choice of basis} we have
 \[R^{[2]}\isom \IC[x_2^2, y_0, y_1, y_2]/(f_2, h_0)\]
 where 
\[
\begin{split}
 f_2 &= y_0y_2-y_1^2+x_2^4,\\
h_0&= y_0^3-2y_0y_1y_2+y_2^3+x_2^2F_2,\\
& \text {with } F_2 = \alpha x_2^4+\beta x_2^2y_1+\gamma y_0y_2, \quad \alpha, \beta, \gamma\in \IC.
\end{split}
\]
In particular, the canonical map embeds $\tilde C$ as a complete intersection of a cubic and a quadric in $\IP^3$.
\end{prop}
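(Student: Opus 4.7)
The plan is to compute the subring $R^{[2]} \subset R$ degree by degree, using the $\IZ/3$-weight decomposition and matching the dimensions from Lemma~\ref{lem: dimension on paracanonical curves} against the Hilbert series of a complete intersection of type $(4, 6)$ in the weighted polynomial ring $\IC[x_2^2, y_0, y_1, y_2]$ with all four variables of degree $2$.

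First I would show that $x_2^2, y_0, y_1, y_2$ generate $R^{[2]}$. By Lemma~\ref{lem: dimension on paracanonical curves}, $\dim R^2 = 4$ and the four elements form a basis compatible with the $\IZ/3$-weight decomposition. To deduce generation in all even degrees, I would prove surjectivity of the multiplication maps $R^2 \otimes R^{2m} \twoheadrightarrow R^{2(m+1)}$. For $m \geq 2$, Proposition~\ref{prop:van} applied to the base-point-free pencil $\langle x_2^2, y_1 \rangle = H^0(C, K_C)$ pulled back to $\tilde C$, combined with the vanishing $H^1(\tilde C, (2m-1)\tilde M) = 0$, gives the result. The case $m = 1$ reduces to the dimension count $\dim R^4 = 9$ against the $10$ quadratic monomials in the generators, producing the unique relation $f_2$.

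Next I would pin down the relations. Weight decomposition locates $f_2$ in weight $2$, where the four monomials $\{x_2^4, x_2^2 y_1, y_0 y_2, y_1^2\}$ satisfy a single linear dependence $\alpha y_0 y_2 + \beta y_1^2 + \gamma x_2^4 + \delta x_2^2 y_1 = 0$. The substitution $y_1 \mapsto y_1 + \mu x_2^2$ preserves the normalisation $(y_0 : y_1 : y_2)(P) = (1{:}1{:}1)$ since $x_2(P) = 0$ and eliminates the $\delta$ term; evaluation at $P$ then forces $\alpha + \beta = 0$; and a final rescaling of $x_2$ and of the equation put $f_2$ into the claimed form, provided $\gamma \neq 0$. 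A parallel analysis in degree $6$, using $\dim R^6 = 15$ and the four $f_2$-syzygies $y_0 f_2, y_1 f_2, y_2 f_2, x_2^2 f_2$, yields exactly one further relation $h_0$, necessarily of weight $0$ by weight accounting. Reducing modulo $(f_2)$ and using the syzygy $y_1 f_2$ to eliminate any $y_1^3$ contribution, the vanishing at $P$ (which forces the coefficients of $y_0^3$, $y_0 y_1 y_2$ and $y_2^3$ to sum to zero) together with the remaining scaling freedom on $y_0, y_2$ produce the leading form $y_0^3 - 2 y_0 y_1 y_2 + y_2^3$, with the three parameters $(\alpha, \beta, \gamma)$ inside $F_2$ as the genuine moduli.

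The main obstacle is verifying $\gamma \neq 0$ in the quadric, equivalently that the quadric containing the canonical image of $\tilde C$ in $\IP^3$ has rank $4$. If the rank were $3$, then $\tilde C$ would lie on the quadric cone $y_0 y_2 = y_1^2$ and hence be trigonal with a distinguished $g^1_3$. I would rule this out via the $\IZ/3$-equivariant structure of $\tilde C \to C$: any $g^1_3$ on $\tilde C$ would be either $\IZ/3$-invariant, and so descend to a special linear series on the integral genus-$2$ curve $C$ incompatible with its Brill--Noether data, or else consist of a $\IZ/3$-orbit of three distinct pencils whose combined contributions would contradict the dimension counts of Lemma~\ref{lem: dimension on paracanonical curves}. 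Once rank $4$ is confirmed, the Hilbert series comparison closes the argument and identifies $\tilde C$ with the asserted $(2,3)$ complete intersection in $\IP^3$.
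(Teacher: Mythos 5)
Your overall strategy --- generation in degree $2$, then dimension counts locating one relation in degree $4$ and one in degree $6$ --- is the paper's strategy, but two of your steps do not go through as stated. The first concerns surjectivity of $R^2\tensor R^4\onto R^6$. For the pencil $W=\langle x_2^2,y_1\rangle=H^0(K_C)$, whose pullback sits inside $H^0(2\tilde M)$, the hypothesis of Proposition \ref{prop:van}\,(i) for the map $W\tensor H^0(4\tilde M)\to H^0(6\tilde M)$ is $H^1(2\tilde M)=0$ (not your $H^1((2m-1)\tilde M)=0$); but $2\tilde M=K_{\tilde C}$, so this group is one-dimensional and the pencil map has a one-dimensional cokernel. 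This is precisely the Max Noether step, and it is where the paper works hardest: decomposing into weight spaces, the pencil trick does give surjectivity onto $H^0(6M+L)$ and $H^0(6M+2L)$ because $H^1(2M)=H^1(2M+2L)=0$ (the twists are non-trivial torsion), but the component $H^0(6M)$ is missed by one dimension, and one must check separately that $y_0\cdot y_0^2=y_0^3$ fills the gap. Without this, your generation statement --- and hence every subsequent dimension count --- is unproved.

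The second gap is in the normalisation of $h_0$. Vanishing at $\tilde P=(0{:}1{:}1{:}1)$ only forces the three coefficients of $y_0^3$, $y_0y_1y_2$, $y_2^3$ to sum to zero, leaving (after scaling the equation) a one-parameter family $y_0^3-(a+1)y_0y_1y_2+ay_2^3$. The ``remaining scaling freedom on $y_0,y_2$'' you invoke is not there: any rescaling compatible with $(y_0{:}y_1{:}y_2)(\tilde P)=(1{:}1{:}1)$ and with the already normalised $f_2$ multiplies all the $y_i$ by a common factor and leaves $a$ unchanged. The paper pins down $a=1$ by a geometric input absent from your argument: the hyperplane section $\{x_2=0\}\cap\phi(\tilde C)$ equals $f^*(2P)$, i.e.\ three points each of multiplicity $2$, and a tangency computation on the quadric forces $a=1$. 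On the positive side, your insistence that the quadric have rank $4$ (the coefficient of $x_2^4$ must be non-zero before one may rescale $x_2$) is a genuine point that the paper passes over in silence; your proposed trigonality argument is plausible in outline but would need to be carried out, in particular the case of a $\IZ/3$-orbit of pencils.
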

\begin{proof}
 We first show that $ R^{[2]} $ is indeed generated by the elements of degree $2$. First, let us consider $R^4= H^0(4M) \oplus H^0(4M+L)\oplus H^0(4M+2L) $.
We have 
\begin{align*}
\langle x_2^2 y_2,  y_1y_2, y_0^2  \rangle  \subset  H^0(4M) \\ 
\langle x_2^2 y_0,  y_0y_1, y_2^2  \rangle  \subset   H^0(4M + L) \\
\langle x_2^4,  y_1^2, x_2^2y_1  \rangle  \subset  H^0(4M+2L) 
\end{align*} 
and we claim that in each case there is no relation between the elements on the left hand side. This is clear for $H^0(4M+2L)= \Sym^2 H^0(2M+L)$.  Since $H^0(2M+L)$ defines a base-point-free pencil every relation in $\langle x_2^2 y_2,  y_1y_2, y_0^2  \rangle $ can be written as $y_0^2 = y_1(\alpha x_2^2+\beta y_1)$. But $\mathrm{div}\, \frac {y_0}{y_1} \sim 2L$ is non-trivial, so the left hand side and the right hand side can never have the same zeros, a contradiction. The argument in the other case is the same.
Then counting dimensions shows that we have equality in all three cases, that is,  
$ R^2 \tensor R^2  \onto  R^4$.

Now let us consider $R^6= H^0(6M) \oplus H^0(6M+L)\oplus H^0(6M+2L) $. 

From the base-point-free pencil trick (Proposition \ref{prop:van}) we get 
\begin{align*}
H^0(2M+L)\otimes H^0(4M) \onto H^0(6M+L) \\
H^0(2M+L)\otimes H^0(4M+L) \onto H^0(6M+2L)
\end{align*} 
since $H^1(2M-L)= 0$ and $H^1(2M)=0$ because  $2M+L \cong K_C$. 
To complete the remaining summand observe that 
\[\ker \{H^0(2M+L)\otimes H^0(4M+2L) \to H^0(6M)\}  \cong  H^0(C, 2M+L)=H^0(C,K_C).\] 
Hence the cokernel is one-dimensional  since $h^0(6M)=5$. By an argument similar to the above we see that $(y_0)^3$ is not in the image of this multiplication map and thus  
\[ H^0(6M)= H^0(2M)\cdot H^0(4M)+  H^0(2M+L)\cdot H^0(4M+2L)\]
is generated by products of  elements  in $R^2$.

For higher degrees Proposition \ref{prop:van} implies the surjectivity of 
\[ H^0(2M+L)\otimes H^0((2hM + iL ) \onto H^0((2h+2)M +(i+1)L)  \]
for every $h\geq 3$ and every $i \in \IZ/3$  because  $H^1((2h-2)M+(i-1)L)=0$  for degree reasons.

Therefore $ R^2 \otimes R^{2h} \onto R^{2h+2}$ for every $h\geq 3$,
which shows that $R^{[2]}$ is generated in degree $2$. In particular,  the surjection $R^{2}\isom \IC[x_2^2, y_0, y_1, y_2]\to R^{[2]}$ 
 defines the canonical embedding $\phi = \phi_{K_{\tilde C}}\colon \tilde C \to \IP^3$,  $\phi (\tilde C)$  a reduced curve of degree 6 and $p_a( \tilde C) =4$, and $\phi(\tilde P) =(0:1:1:1)$ because of \eqref{eq: choice of basis}. 

Counting dimensions we see that $\phi(\tilde C)$ is contained in a unique quadric. Looking more closely at the basis for $R^4$ given above we see that there is a relation $f_2$ involving the missing element $y_0y_2\in R_2^4$. Noting that $f_2(\phi(\tilde P)) = f_2(0:1:1:1)=0$ we can arrange $f_2$ to be of the form above by completing the square with $y_1$ and rescaling $x_2$. This does not affect the choices made in \eqref{eq: choice of basis}. 

As in the classical case, $h^0( \IP^3, \mathcal{I}_{\phi(\tilde C)}(3))=5$, so we can find an irreducible cubic surface vanishing on  $\phi(\tilde C)$, which together with $f_2$ generates the ideal.
More precisely, dimension counting shows that the relation $h_0$ occurs among the monomials of degree $6$ and weight $0$.
Since the relation cannot be contained in $y_0\cdot H^0(4M)$ or $y_2\cdot H^0(4M+L)$ modulo $x_2$ it has to involve $y_0^3$, $y_2^3$ and $y_0y_2y_2$ with non-zero coefficients. Note that $y_0y_1y_2\equiv y_1^3 \mod x_2$ so we can eliminate $y_1^3$. We rescale the equation such that $h_0\equiv y_0^3-(a+1)y_0y_1y_2+ay_2^3 \mod x_2^2$. On $\tilde C$ the divisor of $x_2^2$ is $f^*2P$. Cutting the $\phi(\tilde C)$ with the plane $\{x_2^2=0\}$ thus gives three double points defined by $(f_2,h_0) \mod x_2^2$. Restricting the cubic equation to the quadric and computing the derivative we see that this happens if and only if $a=1$ and thus $h_2\mod x_2^2$ is of the claimed form.
\end{proof}

\begin{lem}\label{lem: S and z's}
Let $S= \bigoplus_{m\geq0} S^m$ be the subring of $R$ generated by $R^1$ and $R^2$. 
\begin{enumerate}
 \item With the choices made in Proposition \ref{prop: canonical ring C tilde} we have  
 \[R^{[2]}\subset S\isom \IC[x_0, y_0, y_1, y_2]/J\subset R,\]
 where $J=(f_2, h_0)$.
\item There exist elements $z_1\in R_1^3$ and $z_2\in R_2^3$ such that $R^3=S^3\oplus \langle z_1, z_2\rangle$ and the relations
\[ f_0=x_2z_1 +y_0^2-y_1y_2=0 \text{ and } f_1=x_2z_2 + y_0y_1-y_2^2=0 \]
hold  in $R^4$. Moreover, $z_1$ and $z_2$ do not vanish at $P$.
\end{enumerate}
\end{lem}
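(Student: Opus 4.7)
The proof splits naturally along (1) and (2).

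For (1), the plan is to reduce to Proposition \ref{prop: canonical ring C tilde} via a parity decomposition. Since $S$ is generated by $R^1 = \langle x_2\rangle$ and $R^2 = \langle y_0, y_1, y_2, x_2^2\rangle$ and the relations $f_2, h_0$ already hold in $R$, there is a surjection
\[\IC[x_2, y_0, y_1, y_2]/(f_2, h_0) \twoheadrightarrow S.\]
Splitting both sides according to the parity of the $x_2$-exponent, the even piece on the left equals $\IC[x_2^2, y_0, y_1, y_2]/(f_2, h_0) = R^{[2]}$ by Proposition \ref{prop: canonical ring C tilde}, and the odd piece is $x_2$ times the even piece. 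The surjection is then an isomorphism on even parts by that Proposition, and on odd parts because multiplication by $x_2$ is injective on $R$ (since $C$ is integral).

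For (2), combining Lemma \ref{lem: dimension on paracanonical curves} with (1) and a quick weight count shows $\dim R^3_i = 2$ while $S^3_0 = \langle x_2^3, x_2 y_1\rangle = R^3_0$, and $S^3_1 = \langle x_2 y_2\rangle$, $S^3_2 = \langle x_2 y_0\rangle$ have codimension $1$ in $R^3_1$ and $R^3_2$ respectively. The missing generators are constructed from the restriction-at-$P$ sequence: using $\ko_C(P) = \ko_C(M+2L)$ and $3L \sim 0$ on $C$, together with the vanishing of $H^1$ in the relevant degrees, one extracts short exact sequences
\[0 \to R^3_1 \xrightarrow{\cdot x_2} R^4_0 \xrightarrow{\mathrm{ev}_P} \IC \to 0, \quad 0 \to R^3_2 \xrightarrow{\cdot x_2} R^4_1 \xrightarrow{\mathrm{ev}_P} \IC \to 0.\]
Part (1) identifies $\{y_0^2, y_1 y_2, x_2^2 y_2\}$ and $\{y_0 y_1, y_2^2, x_2^2 y_0\}$ as bases of the three-dimensional spaces $R^4_0$ and $R^4_1$, and since $y_i(P) = 1$ the elements $y_1 y_2 - y_0^2$ and $y_2^2 - y_0 y_1$ evaluate to zero at $P$, hence lie in the images of $x_2$. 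Injectivity of multiplication by $x_2$ then produces unique $z_1, z_2$ satisfying the relations $f_0 = 0$ and $f_1 = 0$; comparing coefficients in the above bases rules out $z_1 \in \langle x_2 y_2\rangle$ and $z_2 \in \langle x_2 y_0\rangle$, yielding $R^3 = S^3 \oplus \langle z_1, z_2\rangle$.

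I expect the main obstacle to be the non-vanishing of $z_1, z_2$ at $P$. For $z_1$ the plan is to apply the same restriction sequence to $\ko_C(3M+L)$ itself:
\[0 \to H^0(C, 3M + L - P) \xrightarrow{\cdot x_2} R^3_1 \xrightarrow{\mathrm{ev}_P} \IC \to 0.\]
Using $3M + L - P \sim 2M - L \sim 2M + 2L$ on $C$ (the last equivalence from $3L \sim 0$), Lemma \ref{lem: dimension on paracanonical curves} identifies $H^0(3M + L - P) = \langle y_2\rangle$, so the kernel of evaluation at $P$ is precisely the one-dimensional subspace $\langle x_2 y_2\rangle$. Since $z_1 \notin \langle x_2 y_2\rangle$ was already established, $z_1(P) \neq 0$ follows. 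The argument for $z_2$ is symmetric, using $3M + 2L - P \sim 2M$ and kernel $\langle x_2 y_0\rangle$.
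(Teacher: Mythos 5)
Your proof is correct and follows essentially the same route as the paper: part (1) is the paper's ``adjoining a square root of $x_2^2$ introduces no new relations'' spelled out via the parity decomposition, and part (2) uses the same ingredients (the bases of $R^4_0$, $R^4_1$ from the Proposition, evaluation at $P$ with the normalisation $(y_0:y_1:y_2)(\tilde P)=(1:1:1)$, and $x_2$ being a non-zero-divisor). The only cosmetic difference is that you construct $z_1=(y_1y_2-y_0^2)/x_2$ directly from the evaluation sequence, whereas the paper starts from an arbitrary complement element and normalises it; the non-vanishing argument at $P$ is the contrapositive of the paper's.
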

For convenient reference we collect generators and relations of $R$ in small degree in Table \ref{tab: can ring}.

\begin{landscape}
\newcommand{\notS}{}
\newcommand{\notbasis}[1]{#1}
\begin{table}[htb!]\caption{Canonical ring of the $\IZ/3$-cover of a Godeaux restricted to a canonical curve.}\label{tab: can ring}
\begin{center}
\small
\renewcommand{\arraystretch}{1.2}
 \begin{tabular}{ll ll ll}
 \toprule

degree &weights& monomials in $S$ & monomials not  in $S$ & relations in $J$ & relations not in $J$ \\
\midrule
  & $R^1_0$& \\
1 & $R^1_1$& \\
  & $R^1_2$& $x_2$ \\
\cmidrule{2-6}
  & $R^2_0$& $y_0$\\
2 & $R^2_1$& $x_2^2$, $y_1$\\
  & $R^2_2$& $y_2$ \\
\cmidrule{2-6}
  & $R^3_0$& $x_2^3$, $x_2y_1$\\
3 & $R^3_1$& $x_2y_2$& $z_1$\\
  & $R^3_2$& $x_2y_0$& $z_2$ \\
\cmidrule{2-6}
  & $R^4_0$&
  $x_2^2y_2$, $y_0^2$, $y_1y_2$& \notbasis{$x_2z_1$} &&
  $f_0 = x_2z_1+y_0^2-y_1y_2$\\

4 & $R^4_1$&
   $x_2^2y_0$, $y_0y_1$, $y_2^2$& \notbasis{$x_2z_2$}&
  &$f_1 = x_2z_2+y_0y_1-y_2^2$\\

  & $R^4_2$&
  $x_2^4$, $x_2^2y_1$, $y_0y_2$, \notbasis{$y_1^2$}&&
   $f_2 = x_2^4+y_0y_2-y_1^2$&\\
\cmidrule{2-6}
  & $R^5_0$&
  $x_2^3y_0$, $x_2y_0y_1$, $x_2y_2^2$& \notbasis{$x_2^2z_2$}, $y_1z_2$, \notbasis{$y_2z_1$} &
  & $x_2f_1$, $g_0\equiv y_1z_2-y_2z_1\mod x_2$\\

5 & $R^5_1$&
   $x_2^5$, $x_2^3y_1$, $ x_2y_0y_2$, \notbasis{$x_2y_1^2$} & $y_0z_1$, \notbasis{$y_2z_2$}&
  $x_2f_2$&$g_1\equiv y_0z_1-y_2z_2\mod x_2$\\

  & $R^5_2$&
  $x_2^3y_2$, $x_2y_0^2$, $x_2y_1y_2$ &\notbasis{$x_2^2z_1$}, $y_0z_2$, \notbasis{$y_1z_1$}&
  &$x_2f_0$, $g_2\equiv y_0z_2-y_1z_1\mod x_2$\\
\cmidrule{2-6}
\multirow{6}*{$6$}
   & \multirow{2}*{$R^6_0$}&
   $x_2^2\cdot R^4_2$,  & \dots &
  $x_2^2f_2$, $y_1f_2$,& $y_0f_0$, $y_2f_1$ \\
  && $y_0^3$, $y_0y_1y_2$, $y_1^3$, $y_2^3$ & &
  $h_0=  y_0^3-2y_0y_1y_2+y_2^3+x_2^4F_2
     %x_2^2\left(\alpha x_2^3+\beta x_2^2y_1+\gamma\right)
  $
  & $x_2g_1$,  $H_0$\\

  & \multirow{2}*{$R^6_1$}&
  $x_2^2\cdot R^4_0$ & \dots &
 \multirow{2}*{$y_2f_2$} & $x_2^2f_0$, $y_0f_1$, $y_1f_0$, \\
  && $y_0^2y_1$, $y_0y_2^2$, $y_1^2y_2$ & &
  &$x_2g_2$, $H_1$\\

  & \multirow{2}*{$R^6_2$}&
  $x_2^2\cdot R^4_1$,  &\dots &
  \multirow{2}*{$y_0f_2$}&$x_2^2f_1$, $y_1f_1$, $y_2f_0$,\\
  && $y_0^2y_2$, $y_0y_1^2$, $y_1y_2^2$  & & 
  &$x_2g_0$, $H_2$\\
\bottomrule
\end{tabular}
\end{center}
\end{table}

\end{landscape}

\begin{proof}
The first item is clear, because adding a square root of $x_2^2$ in $R^{[2]}$ cannot introduce new relations.

The existence of the $z_i$ follows from comparing $\dim R_i^3$ and $\dim x_2 R_{i+1}^2$, where we use again that $x_0$ cannot be a zero-divisor.  We prove the statement for $z_1$, the argument for $z_2$ is the same. 

 By the first item we have $x_2z_1 \in S_0^4=R_0^4$, that is,  $0\neq x_2z_1=ay^2_0+by_1y_2+cx_2^2y_2$ for some coefficients $a,b,c$.  Replacing $z_1$ by $z_1 - cx_2y_2$ we may assume that $c=0$.   Since the relation has to vanish at $\tilde P$ we also have $a = -b$ and both have to be non-zero. Rescaling $z_1$ we can choose $a = -1$ as claimed. 

For the final claim assume on the contrary that $z_1$ vanishes at $P$. Then $z_1/x_2\in H^0(2M+2L) = R^2_2$, which is spanned by  $y_2$. This is impossible, since $x_2y_2$ and $z_1$ are linearly independent.
\end{proof}

We will now show, that $R$ is generated in degree at most three and determine the relations in degree $5$. 
\begin{lem}\label{lem: relations g}
 The natural map $\bar R = \IC[x_2, y_0, y_1, y_2, z_1, z_2] \to R$ is surjective. Denoting its kernel by $I$ the relations in degree $5$ are
 \[ I^5 = \langle x_2f_1, g_0\rangle \oplus\langle x_2f_2, g_1\rangle\oplus \langle x_2f_0, g_2\rangle\]
 where the $g_i$ can be chosen to be
 \begin{align*}
 g_0 &= y_1z_2-y_2z_1+x_2^3y_0,\\
g_1& = y_0z_1-y_2z_2-x_2F_2,\\
g_2&=y_0z_1-y_2z_2+y_2x_2^3.
\end{align*}
\end{lem}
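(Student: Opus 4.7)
The plan is to first establish surjectivity of $\bar R \twoheadrightarrow R$ and then determine $I^5$ by a dimension count combined with three explicit identities. By Lemma~\ref{lem: S and z's} the six elements already span $R^m$ for $m \leq 3$, and by Proposition~\ref{prop: canonical ring C tilde} the even subring $R^{[2]}$ is generated in degree $2$, which handles all $R^{2k}$ with $k \geq 2$. For $R^{2k+1}$ with $k \geq 3$ I would apply Proposition~\ref{prop:van}\refenum{i} with $F = K_C = 2M+L$ (so $W = R^2_1$) and $H = (2k-1)M + iL$: the required vanishing $H^1((2k-3)M + (i-1)L) = 0$ holds for degree reasons, so $R^2_1 \otimes R^{2k-1}_i \twoheadrightarrow R^{2k+1}_{i+1}$.

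The delicate case is $R^5$. For the targets $R^5_0$ and $R^5_2$ the same pencil trick still works, since $h^0(M + (2-i)L) = 0$ for $i = 1, 2$. For $R^5_1$, where that vanishing fails, I would argue by hand: the image of $R^2_1 \otimes R^3_0 = R^2_1 \otimes x_2 R^2_1 \to R^5_1$ equals $x_2 \cdot R^4_2$, of dimension $3$ by the pencil trick applied to $\Sym^2 R^2_1 \to R^4_2$. The element $y_0 z_1\in R^2_0 \cdot R^3_1$ supplies a fourth independent vector, since both $y_0$ and $z_1$ are non-zero at $\tilde P$ (by \eqref{eq: choice of basis} and Lemma~\ref{lem: S and z's}) while every element of $x_2 R^4_2$ vanishes there; as $\dim R^5_1 = 4$ this forces surjectivity.

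For the degree-$5$ relations, a count of monomials shows $\dim \bar R^5_i = 6$ for each $i$, and combined with $\dim R^5_i = 4$ from Lemma~\ref{lem: dimension on paracanonical curves} this gives $\dim I^5 = 6$ with two relations per weight. One relation per weight is the obvious $x_2 f_j$ coming from degree~$4$. For the remaining three I would propose $g_0, g_1, g_2$ in the stated form and verify, using $f_0 = 0$ and $f_1 = 0$ to rewrite $x_2 z_1 = y_1 y_2 - y_0^2$ and $x_2 z_2 = y_2^2 - y_0 y_1$, the identities
\begin{align*}
x_2 g_0 &= y_0 f_2,\\
x_2 g_1 &= -h_0,\\
x_2 g_2 &= y_2 f_2
\end{align*}
in $R$, whose right-hand sides all vanish. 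Since $\tilde C$ is integral (a connected \'etale cover of the integral curve $C$), $R$ is a domain, so $x_2 g_i = 0$ in $R$ forces $g_i \in I$. The monomials $y_1 z_2$, $y_0 z_1$, $y_0 z_2$ do not occur in any $x_2 f_j$, so the $g_i$ are independent from the obvious relations and the six relations span $I^5$. The identity $x_2 g_1 = -h_0$ additionally pins down the cubic $F_2$ appearing in $g_1$ to be the very one from Proposition~\ref{prop: canonical ring C tilde}.

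The main obstacle is the weight-$1$ piece of $R^5$: the base-point-free pencil trick is unavailable, and one needs the geometric input that $z_1$ (equivalently $z_2$) is non-zero at $\tilde P$ both to produce the missing generator and to get the correct normalisation for $g_1$.
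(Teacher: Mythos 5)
Your proposal is correct, and the surjectivity part runs essentially parallel to the paper's: Proposition \ref{prop:van} disposes of even degrees and of odd degrees at least $7$, and in degree $5$ the only weight the pencil trick misses is $R^5_1$, which you fill in by noting that the image of $R^2_1\otimes R^3_0$ is $x_2R^4_2$ (three-dimensional) and that $y_0z_1$ supplies a fourth independent element because it does not vanish at $P$. The paper organises the same degree-$5$ analysis the other way around, bounding the kernel of $\bar R^5_i\to R^5_i$ by two dimensions, but the substance coincides. Where you genuinely diverge is in pinning down the $g_i$: the paper \emph{derives} them, first showing that any extra relation is $\equiv y_1z_2-a\,y_2z_1 \bmod x_2$ with $a=(z_2/z_1)(P)$, then forcing $a=1$ by reducing $x_2g_0-y_1f_1+ay_2f_0$ modulo $x_2^2$ and comparing with $J_2^6=\langle y_0f_2\rangle$, and finally normalising the $x_2$-divisible tails using $I_1^4=\langle f_1\rangle$. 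You instead write the answer down and \emph{verify} it, via the identities $x_2g_0=y_0f_2$, $x_2g_1=-h_0$, $x_2g_2=y_2f_2$ modulo $(f_0,f_1)$, and then cancel the non-zero-divisor $x_2$ (legitimate: $C$ is integral, so products of non-zero homogeneous sections are non-zero, and this is exactly the property the paper also uses). Your three identities are precisely the paper's degree-$6$ syzygies read backwards, so your route is shorter and avoids the mod-$x_2^2$ bookkeeping, at the cost of having to know the answer in advance --- which is perfectly fine when proving a stated lemma. Combined with $\dim\bar R^5_i-\dim R^5_i=6-4=2$ and the observation that each $g_i$ contains a monomial $y_jz_k$ absent from every $x_2f_j$, this does determine $I^5$ completely.

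One small point: your identity $x_2g_2=y_2f_2$ holds for $g_2=y_0z_2-y_1z_1+y_2x_2^3$, which is the form appearing in Table \ref{tab: can ring} and in the paper's proof, not for the expression $y_0z_1-y_2z_2+y_2x_2^3$ printed in the statement, which is not even homogeneous for the $\IZ/3$-weight. You have silently corrected a typo in the statement; it would be worth saying so explicitly.
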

\begin{proof}
 Proposition \ref{prop:van} implies that on $\tilde C$ the multiplication map 
 \[H^0(m \tilde M )\tensor H^0(2\tilde M) \to H^0((m+2)\tilde M)\]
 is surjective for $m\geq 7$. Since we know that the subring of even elements is generated in degree $2$ and have dealt with degree $3$ in Lemma \ref{lem: S and z's} we only need to show that there is no new generator in degree $5$.
 
Let us discuss in detail the map 
\[ \bar R^5_0 = x_2\langle  x_2^2 y_0,  y_0y_1, y_2^2  \rangle+z_1\langle y_2\rangle+z_2\langle x_2^2, y_1\rangle \to R^5_0,\]
whose kernel contains the known relation $x_2f_1=x_2^2z_2 +x_2( y_0y_1-y_2^2)$ and at least one other relation $g_0$, which cannot be contained in  $x_2\langle  x_2^2 y_0,  y_0y_1, y_2^2  \rangle$. Thus $g_0$ has to contain at least one of the monomials $y_2z_1$ or $y_1z_2$. Since none of the $y_i, z_i$ vanish at $P$ while necessarily $g_0(P)=0$ we  can normalise to get 
\[ g_0 \equiv y_1z_2 - ay_2z_1 \mod x_2\qquad\text{ where } a = \frac{y_1z_2}{y_2z_1}( P) = \frac{z_2}{z_1}( P).\]
If $r$ is any other non-zero relation in the kernel of the map, which is not a multiple of $x_2f_1$ then by the same argument it coincides with $g_0$ modulo $x_2$ (up to multiplication with scalars). Then $g_0-r$ gives a relation divisible by $x_2$ which has to be a multiple of $x_2f_1$. Hence there is no further relation and the map is surjective with kernel spanned by $x_2f_1$ and $g_0$. 

The argument for the other weight spaces is analogous and gives
$g_0= y_1z_2-ay_2z_1+\tilde g_0$, $g_1=y_0z_1-\frac{1}{a}y_2z_2+\tilde g_1$, and 
 $g_2= y_0z_2-ay_1z_1+\tilde g_2$ for some $\tilde g_i$ divisible by $x_2$.

To determine $a$ consider in $\bar R^6_2$ the relation
\begin{equation}\label{eq: x0g0}
x_2g_0-y_1f_1+ay_2f_0  \equiv (a-1)y_1y_2^2+ay_0^2y_2-y_0y_1^2 \mod x_2^2.
\end{equation}
which does not contain $z_1, z_2$ modulo $x_2^2$ and thus is in
\[J_2^6 = \langle y_0f_2 = y_0^2y_2-y_0y_1^2+x_2^4y_0\rangle\]
modulo $x_2^2$. This is only possible if $a=1$.

It remains to show that the polynomials $\tilde g_i$ can be normalised as claimed. Substituting $a=1$ in \eqref{eq: x0g0} we see that the relation  $x_2g_0-y_0f_2-y_1f_1+y_2f_0$ in $\bar R^6_2$ 
is divisible by $x_2^2$. Since $I_1^4=\langle f_1\rangle$ there exists an $\alpha$ such that
\[x_2g_0-y_0f_2-y_1f_1+y_2f_0 = \alpha x_2^2f_1\]
Replacing $g_0$ by $g_0-\alpha x_2f_1$ gives 
\[ g_0 = y_1z_2-y_2z_1+x_2^3y_0.\]

The same argument works for $g_2$ looking at the relation 
\[x_2g_2-y_0f_1+y_1f_0-y_2f_2\]
and for $g_1$ looking at the relation
\[x_2g_1-y_0f_0+y_2f_1+h_0.\]
This concludes the proof.
\end{proof}
\begin{rem}
 One can easily check that with these choices we have the following syzygies of degree $6$:
 \begin{gather*}
x_2g_0-y_0f_2-y_1f_1+y_2f_0=0,\\
x_2g_1-y_0f_0+y_2f_1+h_0=0,\\
x_2g_2-y_0f_1+y_1f_0-y_2f_2=0. 
\end{gather*}
\end{rem}
\begin{lem}\label{lem: relations H}
  In $R^6$ the quadratic monomials in $z_1$, $z_2$ satisfy relations
\begin{gather*}
H_0=z_1z_2 -x_2^2y_0y_2+y_1F_2=0,\\
H_1=z_2^2 - x_2^2y_0^2+y_2F_2=0,\\
H_2= z_1^2 - x_2^2y_2^2+y_0F_2=0
\end{gather*}
\end{lem}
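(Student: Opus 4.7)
The plan is to verify each of the claimed relations $H_0, H_1, H_2$ by multiplying by $x_2^2$ and reducing using the relations already established (namely $f_0, f_1, f_2, h_0$). Since $x_2$ has its zeros only at the three (reduced) points of $f^{-1}(P)$ on the reduced Gorenstein curve $\tilde C$, it does not vanish at the generic point of any component, hence $x_2$ (and so $x_2^2$) is a non-zero-divisor in $R = R(\tilde C, \tilde M)$. Therefore any degree-$6$ element $r \in R$ with $x_2^2 r = 0$ must itself vanish, which is exactly what reduces the problem to polynomial identities in $y_0,y_1,y_2,x_2$.

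Concretely, for $H_0 = z_1 z_2 - x_2^2 y_0 y_2 + y_1 F_2$, I would compute
\[
x_2^2\, H_0 \;=\; (x_2 z_1)(x_2 z_2) - x_2^4 y_0 y_2 + x_2^2 y_1 F_2
\]
and substitute $x_2 z_1 = y_1 y_2 - y_0^2$, $x_2 z_2 = y_2^2 - y_0 y_1$, and $x_2^4 = y_1^2 - y_0 y_2$ from $f_0, f_1, f_2$. Expanding and cancelling, the result collapses to $y_1 \cdot h_0$, which is zero in $R$. Dividing by $x_2^2$ yields $H_0 = 0$.

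The verifications of $H_1$ and $H_2$ are entirely analogous: multiplying $H_1$ by $x_2^2$ and using the same substitutions turns it into $y_2 \cdot h_0$, and multiplying $H_2$ by $x_2^2$ turns it into $y_0 \cdot h_0$. The ``miraculous'' cancellation is in fact forced: any relation of the form $z_i z_j + (\text{polynomial in } x_2, y_\bullet)$ that survives after multiplication by $x_2^2$ must be a combination of $y_0 h_0, y_1 h_0, y_2 h_0$ (the only relations available in $R^8$ in the appropriate weight that are not multiples of $f_i$), and the coefficient is pinned down by the requirement that the purely-$y$ part of the relation divide by $x_2^2$ only after reduction by $h_0$.

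The only mild obstacle is bookkeeping: one has to verify that no other relation in degrees $\leq 8$ could contribute, but this follows from the explicit description of $I^5$ in Lemma~\ref{lem: relations g} together with the syzygies recorded in the remark after it, which account for all relations of degree $6$ coming from multiplying lower-degree relations by generators. With those in hand, the computation above exhausts all possibilities and establishes the three relations $H_0, H_1, H_2$ as stated.
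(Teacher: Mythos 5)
Your proposal is correct and follows essentially the same route as the paper: multiply the candidate relation by $x_2^2$, substitute $x_2z_1$, $x_2z_2$, $x_2^4$ using $f_0,f_1,f_2$, observe that the result reduces to $y_ih_0=0$, and cancel $x_2^2$ using that $x_2$ is a non-zero-divisor on the reduced curve $\tilde C$. The closing paragraphs about excluding other contributions are unnecessary for the statement as given, but the core computation is exactly the paper's.
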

\begin{proof}
 Since $R^6 = S^6$ we know that we can express quadratic polynomials in $z_1$ and $z_2$ as elements in $S$. To compute one such expression we use that $x_2$ is a non-zero-divisor. Thus 
\begin{align*}
 x_2^2z_1^2 &\equiv (-y_0^2+y_1y_2)^2 &&\mod f_0\\
&= y_0(y_0^3-2y_0y_1y_2+y_2^3) - y_2^2(y_0y_2-y_1^2)\\
&=y_0h_0 -y_0x_2^2F_2-y_2^2f_2 +x_2^4y_2^2\\
&\equiv x_2^4y_2^2-x_2^2y_0F_2 && \mod f_2, h\\
&\implies z_1^2 - x_2^2y_2^2+y_0F_2\in  I_2^6.
\end{align*}
 We repeat the calculation for $z_2^2$,
\begin{align*}
 x_2^2z_2^2 &\equiv (-y_2^2+y_0y_1)^2 &&\mod f_1\\
&= y_2(y_0^3-2y_0y_1y_2+y_2^3) - y_0^2(y_0y_2-y_1^2)\\
&\equiv x_2^4y_0^2-x_2^2y_2F_2 && \mod f_2, h\\
&\implies z_2^2 - x_2^2y_0^2+y_2F_2\in  I_1^6,
\end{align*}
and for $z_1z_2$,
\begin{align*}
 x_2^2z_2z_2 & \equiv ( -y_0^2+y_1y_2)(-y_0y_1+y_2^2) &&\mod f_0, f_1\\
& =  y_0y_2(y_1^2-y_0y_2)+ y_1(y_0^3-2y_0y_1y_2 + y_2^3) \\
& \equiv x_2^4y_0y_2-x_2^2y_1F_2 && \mod f_2, h\\
&\implies z_1z_2 -x_2^2y_0y_2+y_1F_2 \in I_0^6
\end{align*}
\end{proof}

\begin{thm}\label{thm: restricted canonical ring Z/3 Godeaux}
Let $X$ be a Gorenstein stable Godeaux surface with torsion group  $T(X) \isom  \IZ/3$ and let $f\colon Y \to X$ be the corresponding triple cover. 
If $\tilde C =\inverse f C$ is the preimage of a paracanonical curve in $X$ and $\tilde M = K_Y|_{\tilde C}$ then 
\[ R(\tilde C, \tilde M ) = R = \IC[x_2, y_0, y_1, y_2, z_1, z_2]/(f_0, f_1, f_2, g_0, g_1, g_2, h_0, H_0, H_1, H_2),\]
with generators and relations chosen as above.

Therefore the canonical ring of $X$ can be described as in \cite{reid78} by lifting relations and syzygies to $\IC[x_1,x_2, y_0, y_1, y_2, z_1, z_2]$.
\end{thm}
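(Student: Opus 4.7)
The plan is to assemble the claimed presentation of $R = R(\tilde C, \tilde M)$ from Proposition \ref{prop: canonical ring C tilde} and Lemmas \ref{lem: S and z's}, \ref{lem: relations g}, and \ref{lem: relations H}, then lift it to the canonical ring of $Y$ via the restriction sequence, and finally invoke Reid's $G$-invariant procedure to reach $R(X, K_X)$.

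For generators: Proposition \ref{prop: canonical ring C tilde} handles the even subring $R^{[2]}$ via $x_2^2, y_0, y_1, y_2$; Lemma \ref{lem: S and z's} supplies $z_1, z_2$ as the only new generators in degree three; Lemma \ref{lem: relations g} rules out any new generator in degree five. For odd $m \geq 7$, I would apply the first part of Proposition \ref{prop:van} to the multiplication
\[ H^0(C, 2M + L) \otimes H^0(C, (m-2) M + (i-1) L) \to H^0(C, m M + i L), \]
which is surjective once $H^1(C, (m-4) M + (i-2) L)$ vanishes; on the integral genus-$2$ curve $C$ this holds by Serre duality and degree considerations (cf.\ Lemma \ref{lem: dimension on paracanonical curves}). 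The relations $f_0, f_1, f_2$ in degree four, $g_0, g_1, g_2$ in degree five, and $h_0, H_0, H_1, H_2$ in degree six are collected from the cited lemmas. That these ten elements generate the ideal $I = \ker(\bar R \onto R)$ can be checked weight-by-weight up to degree six by comparing $\dim \bigl( \bar R / (\text{listed relations}) \bigr)^m_i$ with the values given in Lemma \ref{lem: dimension on paracanonical curves}; Table \ref{tab: can ring} records precisely this bookkeeping. Beyond degree six, surjectivity of multiplication by $R^2$ forces every further syzygy to come from those already present, so no new generators of $I$ are needed.

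To transfer the presentation to $Y$, I would use the short exact sequence
\[ 0 \to \ko_Y((m-1) K_Y) \to \ko_Y(m K_Y) \to \ko_{\tilde C}(m \tilde M) \to 0, \]
which, combined with Kodaira vanishing on the Gorenstein stable surface $Y$, yields a surjection $H^0(Y, m K_Y) \onto H^0(\tilde C, m \tilde M)$ for $m \geq 1$ whose kernel is generated by the section $\tilde x_1$ cutting out $\tilde C \in |K_Y|$. Hence every generator and every relation of $R(\tilde C, \tilde M)$ lifts to $R(Y, K_Y)$ modulo $\tilde x_1$; following \cite[\S3]{reid78} verbatim, one also lifts the syzygies to obtain a presentation of $R(Y, K_Y)$ in $\IC[x_1, x_2, y_0, y_1, y_2, z_1, z_2]$, and taking $G$-invariants then produces $R(X, K_X)$. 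The main obstacle -- already overcome in Lemmas \ref{lem: S and z's}--\ref{lem: relations H} -- lies in pinning down the normalisations in \eqref{eq: choice of basis} and the successive scaling choices that force $h_0$ and the $H_i$ into the clean forms listed; once those normalisations are fixed, the remainder of the argument is essentially bookkeeping.
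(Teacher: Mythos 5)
Your overall skeleton --- assemble the generators and relations from Proposition \ref{prop: canonical ring C tilde} and Lemmas \ref{lem: S and z's}--\ref{lem: relations H}, verify low degrees by dimension counts, then lift to $Y$ and take $G$-invariants as in Reid --- matches the paper. But there is a genuine gap at the one step that actually requires an argument: your claim that ``beyond degree six, surjectivity of multiplication by $R^2$ forces every further syzygy to come from those already present.'' Surjectivity of the multiplication maps $R^2\otimes R^m\to R^{m+2}$ (which is what Proposition \ref{prop:van} delivers) proves that the \emph{ring} is generated in low degree; it says nothing about whether the \emph{ideal} $I=\ker(\bar R\onto R)$ is generated in low degree. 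Deducing generation of relations from generation of the ring requires control of the kernels of those multiplication maps (a Koszul-type statement), which you neither state nor prove; as written, the implication is false as a general principle, and it is exactly the content of the theorem in odd degrees $\geq 7$.

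The paper closes this gap with a normal-form argument specific to the listed relations: modulo the ideal they generate, the $H_i$ eliminate every monomial quadratic in $z_1,z_2$, the relations $f_0,f_1$ eliminate $x_2z_1$ and $x_2z_2$, and the $g_i$ reduce the remaining products $y_jz_k$, so that any element of $\bar R$ of odd degree is congruent to $z_1r_1(y_0,y_1,y_2)+r_2(x_2,y_0,y_1,y_2)$. If such an element vanishes in $R$, reducing modulo $z_1$ shows $r_2$ lies in $J=(f_2,h_0)$, and then the fact that $z_1$ is a non-zero-divisor on the integral curve forces $r_1$ to vanish in $R$ as well; since $r_1$ has even degree, Proposition \ref{prop: canonical ring C tilde} puts it in the known ideal. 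You need either this reduction or some substitute (e.g.\ an explicit regularity or Koszul cohomology computation) before you may conclude that the ten listed relations generate $I$. The remainder of your proposal --- the generation statement for odd $m\geq 7$ via Proposition \ref{prop:van}, the dimension bookkeeping through degree six, and the lifting of the presentation from $\tilde C$ to $Y$ via the restriction sequence and Kodaira vanishing --- is consistent with what the paper does (the last part is delegated to \cite{reid78} there as well).
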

A more conceptual approach to this ring is explained in \cite{reid15}.
\begin{proof}
By the above results there is a surjection from the ring on the right hand side onto $R$ which is a bijection in even degrees and in degree up to $5$. 

Assume that some polynomial  $r(x_2, \dots, z_2)$ of odd degree at least $7$ is zero in $R$. By Lemma \ref{lem: S and z's} it cannot be contained in $S$ and thus has to involve $z_1$ or $z_2$. Using the relations in $I$ we see that 
\[ r \equiv z_1r_1(y_0, y_1, y_2) + r_2(x_2, y_0, y_1, y_2)\mod I.\]
Reducing modulo $z_1$ we see that $r_2\in J$, so actually $r\equiv r \equiv z_1r_1(y_0, y_1, y_2) \mod I$. However, $z_1$ is not a zero-divisor, thus $r_1\in I$ and consequently $r\in I$ as claimed.

\end{proof}

\subsection{Remarks on the case $|T(X)|=2$} 
The description of the universal cover of 
Godeaux surfaces with torsion $\IZ/2$ has been treated in \cite{catanese-debarre89} and from a slightly different point of view in \cite{coughlan16}. We did not attempt to extend  this description to Gorenstein stable Godeaux surfaces but believe it should go through: In the smooth case, the following is the  starting point of both constructions.

\begin{lem}
 Let $f\colon Y\to X$ be the natural double cover of a Gorenstein stable Godeaux surface with torsion $\IZ/2$. Then a canonical curve $D$ of $Y$ is honestly hyperelliptic of arithmetic genus $3$.
\end{lem}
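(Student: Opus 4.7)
I would study the canonical map of $D$ and show it factors as a $2$-to-$1$ morphism onto a smooth conic in $\IP^2$. Write $\sigma$ for the covering involution of $f\colon Y\to X$, associated to a non-trivial $2$-torsion line bundle $L$.

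\emph{Setup and genus.} Since $f$ is étale of degree two we have $K_Y=f^{*}K_X$, hence $K_Y^2=2$; adjunction on $D\in |K_Y|$ gives $K_D=2K_Y|_D$ of degree $4$, whence $p_a(D)=3$. From $f_{*}\omega_Y=\omega_X\oplus(\omega_X\otimes L)$ and $p_g(X)=0$ one obtains $H^0(K_Y)=H^0(K_X+L)$, and all sections of $K_Y$ are $\sigma$-anti-invariant. Consequently $D$ is $\sigma$-invariant and descends to a paracanonical curve $D=f^{-1}(C')$ with $C'\in|K_X+L|$. By Lemma \ref{lem: paracanonical 1}\,(1) (whose proof for a single curve only uses $K_XC'=K_X^2=1$), $C'$ is an integral Gorenstein curve of arithmetic genus two. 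Kodaira vanishing on $Y$ applied to the sequence $0\to\omega_Y^{-1}\to\ko_Y\to\ko_D\to 0$ gives $h^0(\ko_D)=1$, so $D$ is connected; a connected étale double cover of the integral curve $C'$ is integral, and necessarily $L|_{C'}$ is non-trivial.

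\emph{$\sigma$-decomposition of $H^0(K_D)$.} Set $M=K_X|_{C'}$ so that $K_{C'}=2M+L|_{C'}$. Then $K_D=f^{*}K_{C'}$, and combining this with $f_{*}\ko_D=\ko_{C'}\oplus L|_{C'}$ and the projection formula yields
\[
H^0(D,K_D)=H^0(C',K_{C'})\oplus H^0(C',2M),
\]
the first summand $\sigma$-invariant and the second $\sigma$-anti-invariant. The first has dimension $p_a(C')=2$, and Riemann--Roch on $C'$ together with $h^0(L|_{C'})=0$ shows the second has dimension one. Choose a basis $u_1,u_2$ of $H^0(K_{C'})$ and a generator $v$ of $H^0(2M)$; the canonical map $\phi_D\colon D\to \IP^2$ is then given in these coordinates, and it is a morphism because $|K_D|$ is base-point free on the integral Gorenstein genus three curve $D$.

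\emph{Conic in $\IP^2$ and smoothness.} The pencil $|K_{C'}|$ is base-point free on the integral Gorenstein genus two curve $C'$, so Proposition \ref{prop:van} gives an isomorphism $\Sym^2 H^0(K_{C'})\xrightarrow{\sim} H^0(2K_{C'})$ (both sides have dimension three); in particular $\{u_1^2,u_1u_2,u_2^2\}$ is a basis of $H^0(2K_{C'})$. Since $v^2\in H^0(2K_D)$ is $\sigma$-invariant it lies in $H^0(2K_{C'})$, yielding a relation
\[
v^2=\alpha u_1^2+\beta u_1u_2+\gamma u_2^2
\]
which is the equation of a conic $Q\subset\IP^2$ through $\phi_D(D)$. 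If $Q$ were degenerate, the right hand side would be a square $(\lambda u_1+\mu u_2)^2$, and on the integral curve $D$ this would force $v=\pm(\lambda u_1+\mu u_2)$ as a section of $K_D$, contradicting the fact that $v$ is $\sigma$-anti-invariant while the $u_i$ are invariant. Hence $Q$ is a smooth conic $\cong \IP^1$; since $u_1,u_2,v$ are linearly independent, $\phi_D(D)$ is a positive-dimensional subvariety of the irreducible $Q$ and therefore equals $Q$, and a degree count $\deg K_D/\deg Q=4/2=2$ shows that $\phi_D\colon D\to Q$ is a double cover. This is exactly the honestly hyperelliptic structure. The two main technical points are the integrality of $D$ (which uses Kodaira vanishing and the non-triviality of $L|_{C'}$) and the non-degeneracy of $Q$ (which uses $\sigma$-anti-invariance of $v$), both ultimately hinging on $p_g(X)=0$.
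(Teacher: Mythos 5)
Your overall strategy---decompose $H^0(K_D)$ into $\sigma$-eigenspaces, observe that $v^2$ lies in the image of $\Sym^2 H^0(K_{C'})$, and read off a conic through the canonical image---is exactly the route the paper takes, and your discussion of the non-degeneracy of the conic and of the degree of $\phi_D$ usefully expands the paper's terse ``we are done''. There is, however, one genuine gap: the integrality of $D$. You justify it by asserting that a connected \'etale double cover of an integral curve is integral, and this is false for non-normal curves. If, say, $C'$ is a rational curve with a node, the cover obtained by gluing two copies of the normalisation $\IP^1$ cyclically over the two preimages of the node is connected, \'etale of degree $2$, and reducible; the corresponding non-trivial $2$-torsion class is one coming from the gluing data rather than from the normalisation. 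A paracanonical curve $C'$ here is only known to be an integral Gorenstein curve of arithmetic genus $2$---it may be rational with two nodes, in which case every non-trivial $2$-torsion class on $C'$ yields a connected but reducible double cover---so non-triviality of $L|_{C'}$ (which is all that connectedness of $D$ gives you) does not rule out that $D=D_1\cup D_2$ with each $D_i$ mapping birationally to $C'$.

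This matters because integrality is precisely what you use to exclude the degenerate conic: if $D$ is reduced but reducible, the relation $(v-l)(v+l)=0$ with $l=\lambda u_1+\mu u_2$ can hold with $v=l$ on $D_1$ and $v=-l$ on $D_2$ (this is even compatible with $\sigma$-anti-invariance of $v$, since $\sigma$ must swap the two components), so the canonical image could a priori be a pair of lines and the induced map $D\to\IP^1$ would have degree $4$ rather than $2$. You therefore need either an independent argument that $L|_{C'}$ does not split over the function field of $C'$ (equivalently, that $D$ is irreducible), or a separate treatment of the reducible case. To be fair, the paper's own two-line proof does not address this point either---but it also does not assert integrality of $D$; in your write-up the false general statement should be replaced by an honest argument or an explicit case analysis.
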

\begin{proof}
 The canonical curve is a double cover of a paracanonical curve $C\subset X$, which is irreducible of arithmetic genus $2$ by Lemma \ref{lem: paracanonical 1}. Thus the canonical pencil of $C$ defines a polynomial subring of $R(D, K_D)$ and since the image of the multiplication map $H^0(K_D)\tensor H^0(K_D) \to H^0(2K_D)$ is contained in $H^0(2K_C)$ we are done. 
\end{proof}

\section{Proof of Theorem A}
In this short section we quickly deduce Theorem A from the descriptions of the canonical rings in the previous section.

Let $X$ be a Gorenstein stable Godeaux surface with torsion of order  $3\leq d\leq  5$ and $f\colon Y \to X$ the cover associated to $T(X)\isom \IZ/d$. Then we have seen in the preceding section that the canonical ring of $Y$, including the action of $\IZ/d$, is uniquely determined up to the choice of some parameters parametrised by an open set in some projective space. We know from \cite{reid78} that the parameter space is non-empty in each case and thus the image of each of these families is a uni-rational irreducible component. 

Since the general element corresponds to  a smooth Godeaux surface and smoothness is an open condition every surface in the family is smoothable.

\section{The canonical ring of a simply connected stable Godeaux surface}

The canonical rings of simply-connected (stable) Godeaux surfaces remain elusive (see however \cite{catanese-pignatelli00}). When we consider degenerations, the constructions become much more explicit and thus actual computations are possible. In this section we give a description of the canonical ring of a simply connected Gorenstein stable Godeaux surface which however rather serves as an indication of the complexity of the problem than as a starting point for a general structure theory. 
The computation was carried out by the second author in discussion with Roberto Pignatelli.

Let $a,b,c$ be homogeneous coordinates in the plane $\bar X = \IP^2$ and consider the plane quartic $\bar D$ which is union of 
\begin{gather*}
 C = \{f = a^2-6ab+b^2-c^2=0\},\\
 L = \{a=0\} =\Proj( \IC[b,c]),\\
 L' = \{b=0\} =  \Proj( \IC[a,c]).
\end{gather*}
We define an involution $\tau$ on the normalisation $\bar D^\nu$ of $\bar D$ that preserves the conic and interchanges the lines by its action on functions: 
\begin{align*}
 \text{on $C$} && \tau^*(a,b,c) = (-a, -b, c), \\
 L\isom L' && \tau^*(b,c) = -\frac12\left(a+c, 3a-c\right).
\end{align*}
Then as explained in \cite[Section~3.B, Case $(P_2)$; Prop.~3.16]{FPR16b} the triple $(\bar X, \bar D, \tau)$ gives rise to a non-normal Gorenstein stable Godeaux surface $X$, by glueing $\bar D$ to itself as prescribed by $\tau$. 

Let  $\pi\colon  \bar X \to X$ be the normalisation of $X$, and $D\subset X$ the non-normal locus. Then $\inverse \pi (D) = \bar D$  and $D$ consists of two irreducible components: a rational curve with one node, which is the image of the conic, and a rational curve with a triple point, which is the image of the lines. These two components meet transversally at the singular points, that is, $D$ has a unique singular point which  (analytically locally) looks like the coordinate axes in $\IC^5$.

Our computation, which we will only sketch, is based on the following result of Koll\'ar, which we state  in a simplified version.
\begin{prop}[{\cite[Prop.~5.8]{KollarSMMP}}]\label{prop: sections}
Let $X$ be a Gorenstein stable  surface. Define the different  $\Delta = \Diff_{\bar D^\nu}(0)$ by the equality $(K_{\bar X}+\bar D)\restr{ \bar D}  = K_{\bar D}+ \Delta$. 

Then a section $s\in H^0(\bar X, m(K_{\bar X}+\bar D))$ descends to a section in $H^0(X, mK_X)$ if and only if the image of $s$ in $H^0(\bar D^\nu, m( K_{\bar D}+ \Delta))$ under the Residue map is $\tau$-invariant if $m$ is even respectively $\tau$-anti-invariant if $m$ is odd.
 \end{prop}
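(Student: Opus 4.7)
The plan is to set everything up via Koll\'ar's pushout description of $X$. In that description $\pi\colon \bar X\to X$ is the universal co-equaliser of the two maps $\bar D^\nu \to \bar D \hookrightarrow \bar X$ and $\bar D^\nu \xrightarrow{\tau} \bar D^\nu \to \bar D \hookrightarrow \bar X$, so that a regular function on $\bar X$ descends to $X$ exactly when its restriction to $\bar D^\nu$ is $\tau$-invariant. The content of the proposition is the analogous statement for $\omega_X^{\otimes m}$, where a sign enters.

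First I would handle $m=1$ by identifying $\omega_X$ with the subsheaf of $\pi_*\omega_{\bar X}(\bar D)$ cut out by the condition that the residue on $\bar D^\nu$ is $\tau$-anti-invariant. The geometric source of this sign is already visible in the toy model $\bar X = \IP^1$, $\bar D = \{0,\infty\}$, with $\tau$ swapping the two points: then $X$ is a nodal $\IP^1$, $\omega_X$ is generated by $dt/t$, and the residues $(1,-1)$ at the two preimages $(0,\infty)$ of the node are indeed interchanged with a sign by $\tau$. This reflects the fact that the two branches of $\bar X$ meeting at a generic point of $D$ carry opposite orientations.

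For arbitrary $m$ I would tensor this identification $m$-fold. In the toy model, $(dt/t)^{\otimes m}$ has iterated residues $(1,(-1)^m)$, which transform by $(-1)^m$ under $\tau$. Globally this says that the $m$-th residue map
\begin{equation*}
 \Res^{m}\colon \pi_*\omega_{\bar X}(\bar D)^{\otimes m} \longrightarrow \pi_*\omega_{\bar D^\nu}(\Delta)^{\otimes m}
\end{equation*}
is $\tau$-equivariant up to the twist by $(-1)^m$, so a section descends to $\omega_X^{\otimes m}$ if and only if its image $\sigma$ satisfies $\tau^*\sigma = (-1)^m \sigma$, i.e.\ $\tau$-invariance for $m$ even and $\tau$-anti-invariance for $m$ odd. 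Taking global sections yields the stated criterion.

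The hard part is that the node model only captures the generic behaviour along $D$; at pinch points and more degenerate slc singularities on $D$ the local picture is considerably more intricate. However, both $\omega_X^{[m]}$ and $\omega_{\bar D^\nu}(\Delta)^{\otimes m}$ are $S_2$ sheaves on their respective ambient spaces, and the locus on which $D$ is not \'etale-locally an ordinary double point has codimension at least two in $\bar X$. It is therefore enough to verify the equivariance criterion on the big open subset along which $D$ is a node. This is precisely the route taken in \cite[Sect.~5.2]{KollarSMMP}, and the proof I would write amounts to filling in these local residue computations.
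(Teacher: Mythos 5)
The paper offers no proof of this proposition---it is imported verbatim (in simplified form) from \cite[Prop.~5.8]{KollarSMMP}---so there is no internal argument to compare against. Your sketch correctly reproduces the proof given in that reference: the identification of $\omega_X$ inside $\pi_*\omega_{\bar X}(\bar D)$ by the anti-invariance of the residue at a node, the resulting $(-1)^m$ twist for $m$-th powers, and the $S_2$ reduction to the big open set where the conductor is an ordinary double point are exactly the ingredients of Koll\'ar's argument, so your proposal is correct and follows essentially the same route as the cited source.
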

To do the actual computation we need to pick explicit generators of the respective bundles and compute the residue maps. We choose as  generator of $\omega_{\bar X}(\bar D)$ the rational 2-form
\[ \bar\omega = \frac{abc}{abf}\left(\frac{\mathrm{d}a\wedge \mathrm{d}b}{ab}+\frac{\mathrm{d}b\wedge \mathrm{d}c}{bc}+\frac{\mathrm{d}c\wedge \mathrm{d}a}{ca}\right) =  \frac{c}{f}\left(\frac{\mathrm{d}a\wedge \mathrm{d}b}{ab}+\frac{\mathrm{d}b\wedge \mathrm{d}c}{bc}+\frac{\mathrm{d}c\wedge \mathrm{d}a}{ca}\right)\]
which gives an isomorphism $R(\bar X, K_{\bar X}+\bar D)\isom \IC[a,b,c]$. 
As generators for $\omega_{L}(\bar D-L)$, $\omega_{L'}(\bar D-L')$, respectively $\omega_{C}(L+L')$ we choose the forms
\begin{gather*} \omega = \Res_{L} (\bar \omega) = \frac c {f} \left(\frac {\mathrm{d}b}b -\frac{\mathrm{d}c}c\right)\\
\omega' = \Res_{L'} (\bar\omega)  = \frac c{f} \left( - \frac{\mathrm{d}a}a+\frac{\mathrm{d}c}c\right)\\
\eta = \Res_{C}(\bar \omega) 
\end{gather*}
Note that  $\tau^*\omega = \omega'$, that is, with the choices made,  the residue maps are compatible with the identifications $R(L, \omega_{L}(C+L'))\isom \IC[b,c]$ and $ R(L', \omega_{L'}(C+L))\isom \IC[a,c]$ and we have by Proposition \ref{prop: sections}
\[ R(X, K_X) \isom \left\{g \in \IC[a,b,c]\left| \begin{matrix} \text{$g$ is contained in $\IC[a,b,c^2]$ modulo $f$}\\  g(a,0,c) = g(0, 1/2(a+c), 1/2(3a+c))\end{matrix}\right.\right\},\]
where the apparent change of signs in the involutions it due to the fact that we take anti-invariant sections in odd degrees and invariant sections in even degrees. The generators of the resulting ring can be computed with a computer algebra system and we get:
\begin{prop}
 The canonical ring of $X$ is generated as a subring of $\IC[a,b,c]$ by
% {\small \begin{align*}
% & ab, 3a^2+3b^2+c^2,\\
% & ab^2, a^2b,\\
% & 12b^3-a^2c+6abc-b^2c+8ac^2-4bc^2+c^3\\
% & 12a^3-a^2c+6abc-b^2c-4ac^2+8bc^2+c^3\\
% & abc^2, ab^3,\\
% & 9a^3c-45a^2bc-45ab^2c+9b^3c+15a^2c^2+15b^2c^2-9ac^3-9bc^3+c^4\\
% & 27b^4+9a^2bc-54ab^2c+9b^3c+30a^2c^2+3b^2c^2-9bc^3+2c^4\\
% & ab^2c^2\\
% & 18a^2b^2c-108ab^3c+18b^4c+39a^3c^2+33b^3c^2-a^2c^3+6abc^3-19b^2c^3-7ac^4-bc^4+c^5\\
% & 3a^4c-612ab^3c+105b^4c+233a^3c^2+199b^3c^2-9a^2c^3+18abc^3-111b^2c^3-41ac^4-7bc^4+6c^5
%  \end{align*}}
 { \begin{align*}
& ab, 3a^2+3b^2+c^2,\\
& ab^2, a^2b,\\
& 12b^3-a^2c+6abc-b^2c+8ac^2-4bc^2+c^3,\\
& 12a^3-a^2c+6abc-b^2c-4ac^2+8bc^2+c^3,\\
& abc^2, ab^3,\\
& 9a^3c-45a^2bc-45ab^2c+9b^3c+15a^2c^2+15b^2c^2-9ac^3-9bc^3+c^4,\\
& 27b^4+9a^2bc-54ab^2c+9b^3c+30a^2c^2+3b^2c^2-9bc^3+2c^4,\\
& ab^2c^2,\\
& 18a^2b^2c-108ab^3c+18b^4c+39a^3c^2+33b^3c^2-a^2c^3
\\& \qquad+6abc^3-19b^2c^3-7ac^4-bc^4+c^5,\\
& 3a^4c-612ab^3c+105b^4c+233a^3c^2+199b^3c^2-9a^2c^3
\\& \qquad+18abc^3-111b^2c^3-41ac^4-7bc^4+6c^5,
 \end{align*}}
and there are $54$ relation in degrees $(6^6, 7^{12},  8^{18},9^{12}, 10^6)$.
In particular, $X$ embeds canonically as a codimension $10$ subvariety in $\IP(2^2, 3^4, 4^4, 5^3)$.
 \end{prop}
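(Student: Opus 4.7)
The argument is computational, based on the explicit description of $R(X, K_X)$ as a subring of $\IC[a,b,c]$ derived in the paragraph preceding the proposition. The plan is to carry out this calculation degree by degree in a computer algebra system and extract a minimal presentation mechanically.

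For each degree $m$, the subspace $R(X, K_X)_m \subset \IC[a,b,c]_m$ is cut out by a system of linear equations: the residue condition on the conic $C$ imposes that the coefficient of $c$ in the normal form modulo $f$ vanishes, while the matching condition on $L \cup L'$ relates the coefficients of $g(a,0,c)$ and $g(0,b,c)$. Solving these linear systems yields explicit bases of each graded piece. The dimensions serve as a consistency check, since $h^0(mK_X) = 1 + \binom{m}{2}$ for $m \geq 2$ by Riemann--Roch and Kodaira vanishing on a Gorenstein stable Godeaux surface. Starting from degree $2$, a basis element is recorded as a new generator precisely when it does not lie in the subring produced by lower-degree elements; once generators $u_1,\ldots,u_N$ are fixed, the minimal relations are obtained by computing the kernel of the induced map $\IC[u_1,\ldots,u_N] \to R(X,K_X) \subset \IC[a,b,c]$ one degree at a time, modulo relations already found.

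The main obstacle is computational rather than conceptual. Unlike the $|T(X)| = 3,4,5$ cases, where the Galois action decomposes the canonical ring into small weight-spaces and the presentation can be read off by hand, here no such structure is available and the presentation must be produced by linear algebra in $\IC[a,b,c]$. The termination of the procedure at generators of degree $\leq 5$ and relations of degree $\leq 10$ should be justified by comparing the Hilbert function of the candidate presentation with that of $R(X,K_X)$ up to a degree exceeding the Castelnuovo--Mumford regularity of the canonical model, beyond which standard vanishing results preclude new generators or relations. The codimension statement then follows by counting the $13$ listed generators against the $54$ relations inside $\IP(2^2, 3^4, 4^4, 5^3)$.
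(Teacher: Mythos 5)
Your proposal follows essentially the same route as the paper: after the identification of $R(X,K_X)$ with the subring of $\IC[a,b,c]$ cut out by the condition modulo $f$ on the conic and the matching condition on the two lines, the paper likewise extracts generators and relations by a degree-by-degree computer algebra computation (checked against $h^0(mK_X)=1+\binom m2$), and in fact gives even less detail than you do about termination. Your account is correct and, if anything, slightly more careful than the paper's own sketch.
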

The numbers and degrees of generators and relations in this ring are the same as in the other example that was computed in \cite{rollenske16}, which makes hope that a general structure theory is lurking in the background.

Note that by 
\cite[Example~47]{liu-rollenske14} the unique degenerate cusp of $X$ is a base point of the  $2$-canonical map, and neither the 3-canonical nor the 4-canonical map embed the non-normal locus of $X$. 

%   \bibliographystyle{../halpha}
%  \bibliography{../srollens}
%  
 \def\cprime{$'$}

\end{document}